\documentclass{amsart}
\usepackage[fontsize=11pt]{scrextend}
\usepackage{nicefrac}
\usepackage{enumitem}
\usepackage{graphicx}
\usepackage{mathrsfs}
\usepackage{xcolor}
\usepackage{bbm}
\usepackage[hidelinks]{hyperref}
\usepackage{cite}
\usepackage[normalem]{ulem}
\usepackage{cancel}
\usepackage{tikz}
\usepackage[a4paper, top=3cm, bottom=2cm, left=2.5cm, right=2cm]{geometry}

\newtheorem{theorem}{Theorem}[section]

\newtheorem{lemma}[theorem]{Lemma}
\newtheorem{sublemma}[theorem]{Sublemma}

\theoremstyle{definition}

\theoremstyle{remark}

\newtheorem{question}{Question}

\numberwithin{equation}{section}

\begin{document}
	\allowdisplaybreaks
	\sloppy

\title[On hypercyclic spaces and (common) upper frequently hypercyclic spaces]{On hypercyclic spaces and (common) $\mathscr{U}$\!-frequently hypercyclic spaces}

\author[N. G. Albuquerque]{Nacib G. Albuquerque}
\address{Departamento de Matemática, 
Universidade Federal da Paraíba, 
58.051-900 -- João Pessoa -- Brasil}
\email{ngalbuquerque@mat.ufpb.br}
\thanks{}

\author[T. R. Alves]{Thiago R. Alves}
\address{Departamento de Matem\'{a}tica,
	Instituto de Ci\^{e}ncias Exatas,
	Universidade Federal do Amazonas,
	69.080-900 -- Manaus -- Brazil}
\email{alves@ufam.edu.br}
\thanks{}

\author[G. Botelho]{Geraldo Botelho}
\address{Instituto de Matemática e Estatística, Universidade Federal de Uberlândia,
	38.400-902 -- Uberlândia -- Brazil}
\email{botelho@ufu.br}
\thanks{}

\author[V. V. F\'{a}varo]{Vin\'{\i}cius V. F\'{a}varo}
\address{Instituto de Matemática e Estatística, Universidade Federal de Uberlândia,
	38.400-902 -- Uberlândia -- Brazil}
\email{vvfavaro@ufu.br}
\thanks{}

\subjclass[2020]{Primary 47A16; Secondary 47B01}

\date{}

\dedicatory{}

\keywords{Unilateral weighted backward shift. Hypercyclic subspace. Upper frequently hypercyclic subspace. Frequently hypercyclic vector.}

\begin{abstract}
Let $B$ be an unilateral weighted backward shift on $\ell_p$, $1 \leq p < \infty$, that admits a ${\mathscr{U}\text{\!-frequently}}$ hypercyclic subspace. We prove that $B$ admits such a subspace free of frequently hypercyclic vectors. The proof technique we develop also allows us to prove that $B$ admits a hypercyclic subspace free of ${\mathscr{U}\text{\!-frequently}}$ hypercyclic vectors, and to solve a question posed  by B\`es and Menet in 2015 on the existence of common ${\mathscr{U}\text{\!-frequently}}$ hypercyclic  subspaces.
\end{abstract}

\maketitle

\section{Introduction and Main Results} 
\label{Sc-1}

Let $T$ be a bounded linear operator on an infinite-dimensional Banach space $X$ over $\mathbb{R}$ or $\mathbb{C}$. A vector $x \in X$ is called  \textit{hypercyclic} for $T$ if its orbit under $T$, defined by
$$\mathrm{Orb}(x,T) := \{x, Tx, T^2x, \ldots\},$$
is dense in $X$. In this case, $T$ is said to be \textit{hypercyclic}. Notions stronger than hypercyclicity, such as \textit{frequent hypercyclicity} and ${\mathscr{U}\textit{\!-frequent}}$ \textit{hypercyclicity}, were introduced by Bayart and Grivaux \cite{BayGri2004, BayGri2006} and Shkarin \cite{Shkarin09}, respectively. Roughly speaking, for an operator $T$ to satisfy these notions, there must exist a vector whose orbit under $T$ visits every non-empty open subset of the space $X$ with positive density (see Section~\ref{Sec-2} for precise definitions).

A \textit{hypercyclic subspace} of an operator $T : X \to X$ is a closed infinite-dimensional subspace of $X$ consisting, up to the zero vector, of hypercyclic vectors for $T$. This concept has been extensively investigated since \cite{BerMon1995} (see \cite[Chap.~8]{BayMath-book}, \cite[Chap.~10]{Gro-ErdMang2011-book}, and \cite{Menet2014, ChanMad25, DasMun24, BayErnMen16, Bernal06, GonLeonMont-2000, LeonMon97, LeonMul06, Montes96, Shkarin10, BerCalLopPra25}). Of particular interest to us is a result proved by Menet~\cite{Menet2014}, who characterized the unilateral weighted backward shifts on $\ell_p$, $1 \leq p < \infty$, that admit hypercyclic subspaces (see Theorem~\ref{thm-carac} below).

Analogously, the notions of frequently hypercyclic subspaces and ${\mathscr{U}\text{\!-frequently}}$ hypercyclic subspaces were introduced and studied in \cite{BonillaErdmann12} and \cite{BesMenet15}, respectively. In particular, Bès and Menet~\cite{BesMenet15} established necessary and sufficient conditions on the weights of unilateral weighted backward shifts on $\ell_p$ for the existence of a ${\mathscr{U}\text{\!-frequently}}$ hypercyclic subspace. Remarkably, these conditions turn out to be closely related to those obtained by Menet in the hypercyclic case.  Precisely, by combining the result of Bès and Menet \cite[Corollary~3.9]{BesMenet15} with those of Bayart and Ruzsa \cite[Theorem~4]{BayRuz2015} and Menet \cite[Theorem~3.3 and Corollary~4.6]{Menet2014}, the following characterization is obtained:
\begin{theorem} \label{thm-carac}
Let $1 \leq p < \infty$ and let $B_{\mathbf{w}} : \ell_p \to \ell_p$ be a weighted backward shift with weight sequence $\mathbf{w} = (\omega_k)_{k\geq 1}$. The following statements are equivalent:
\begin{enumerate}[noitemsep, topsep=0pt, label=(\roman*), ref=$(\roman*)$]
    \item The weight $\mathbf{w}$ satisfies
    \begin{align} \label{eq-19-08-I}
        \sum_{k \geq 1} (\omega_1 \, \omega_2 \cdots \omega_k)^{-p} < \infty 
        \quad \text{and} \quad 
        \sup_{n \geq 1} \inf_{k \geq 1} \prod_{j=1}^n \omega_{k+j} \leq 1.
    \end{align}
    \item \label{thm-B} $B_{\mathbf{w}}$ admits a ${\mathscr{U}\text{\!-frequently}}$ hypercyclic subspace.
    \item \label{thm-C} $B_{\mathbf{w}}$ is ${\mathscr{U}\text{\!-frequently}}$ hypercyclic and admits a hypercyclic subspace.
\end{enumerate}
\end{theorem}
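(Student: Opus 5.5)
The theorem is assembled from known results, so the plan is to identify what each cited result gives and chain them. For a weighted backward shift $B_{\mathbf{w}}$ on $\ell_p$, I will use three ingredients. First, Menet's characterization \cite[Theorem~3.3 and Corollary~4.6]{Menet2014}: $B_{\mathbf{w}}$ admits a hypercyclic subspace if and only if it is hypercyclic and $\sup_{n \geq 1} \inf_{k \geq 1} \prod_{j=1}^n \omega_{k+j} \leq 1$. Second, Bayart and Ruzsa \cite[Theorem~4]{BayRuz2015}: $B_{\mathbf{w}}$ is ${\mathscr{U}\text{\!-frequently}}$ hypercyclic if and only if $\sum_{k \geq 1} (\omega_1 \cdots \omega_k)^{-p} < \infty$. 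Third, Bès and Menet \cite[Corollary~3.9]{BesMenet15}, which characterizes the existence of a ${\mathscr{U}\text{\!-frequently}}$ hypercyclic subspace. I expect it to give exactly that $B_{\mathbf{w}}$ is ${\mathscr{U}\text{\!-frequently}}$ hypercyclic and satisfies the same infimum condition on the weights as in Menet's result.

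I will prove $(i)\Leftrightarrow(iii)$ first. Assume \eqref{eq-19-08-I}. The summability condition gives ${\mathscr{U}\text{\!-frequent}}$ hypercyclicity by Bayart--Ruzsa. In particular $B_{\mathbf{w}}$ is hypercyclic; alternatively, summability forces $\omega_1\cdots\omega_k\to\infty$, which is Salas' criterion. Together with the infimum condition, Menet's theorem then yields a hypercyclic subspace. Conversely, ${\mathscr{U}\text{\!-frequent}}$ hypercyclicity gives the summability condition by Bayart--Ruzsa. The existence of a hypercyclic subspace gives the infimum condition by the necessity part of Menet's theorem.

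Next I will prove $(i)\Leftrightarrow(ii)$ through Bès--Menet's corollary in the same way. The direction $(ii)\Rightarrow(iii)$ also holds directly, with no weight conditions needed. Every nonzero vector of a ${\mathscr{U}\text{\!-frequently}}$ hypercyclic subspace is ${\mathscr{U}\text{\!-frequently}}$ hypercyclic, hence hypercyclic. So that subspace is itself a hypercyclic subspace, and $B_{\mathbf{w}}$ is ${\mathscr{U}\text{\!-frequently}}$ hypercyclic. This makes the cycle $(i)\Rightarrow(ii)\Rightarrow(iii)\Rightarrow(i)$, and only $(i)\Rightarrow(ii)$ relies on the Bès--Menet sufficiency statement.

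The main obstacle is matching the exact form of the conditions in the cited papers to \eqref{eq-19-08-I}. Bès--Menet phrase their criterion through a general criterion for ${\mathscr{U}\text{\!-frequently}}$ hypercyclic subspaces. Menet's condition may be stated with $\limsup$ or with a sequence $(n_k)$ along which $\sup_{k}\prod$ stays bounded. I would verify that these are equivalent to $\sup_n \inf_k \prod_{j=1}^n \omega_{k+j} \leq 1$; the relevant alternative forms are "there is no $n$ with $\inf_k \prod_{j=1}^n\omega_{k+j} > 1$" and "$\inf_k \prod_{j=1}^n \omega_{k+j}\le 1$ for all $n$". If Bès--Menet's sufficiency requires ${\mathscr{U}\text{\!-frequent}}$ hypercyclicity as a hypothesis, Bayart--Ruzsa supplies it from the summability condition.
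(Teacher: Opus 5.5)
Your proposal is correct and follows essentially the same route as the paper, which obtains Theorem~\ref{thm-carac} simply by combining B\`es--Menet's Corollary~3.9, Bayart--Ruzsa's Theorem~4 and Menet's Theorem~3.3/Corollary~4.6, without further argument. Your observation that (ii)$\Rightarrow$(iii) is immediate, so that only (i)$\Rightarrow$(ii) depends on the B\`es--Menet sufficiency result, is a harmless streamlining of that same chain.
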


Statements \ref{thm-B} and \ref{thm-C}  in Theorem~\ref{thm-carac} naturally lead us to the following two more restrictive questions:

\begin{question}\label{q:A}
\textit{Which weighted backward shifts on $\ell_p$ admit ${\mathscr{U}\text{\!-frequently}}$ hypercyclic subspaces not containing frequently hypercyclic vectors?}
\end{question}

\begin{question}\label{q:B}
\textit{Which ${\mathscr{U}\text{\!-frequently}}$ hypercyclic weighted backward shifts on $\ell_p$ admit  hypercyclic subspaces not containing  ${\mathscr{U}\text{\!-frequently}}$ hypercyclic vectors?}
\end{question}

It is worth noting that Menet \cite{Menet2015} proved the existence of frequently hypercyclic weighted backward shifts on $\ell_p$ that admit a hypercyclic subspace but admit no frequently hypercyclic subspace. Subsequently, Bès and Menet~\cite{BesMenet15} established the existence of frequently hypercyclic weighted backward shifts that admit a ${\mathscr{U}\text{\!-frequently}}$ hypercyclic subspace but admit no frequently hypercyclic subspace. Nevertheless, Questions~1 and~2 above differ from those considered by Bès and Menet in \cite{BesMenet15} and \cite{Menet2015} because the ${\mathscr{U}\text{\!-frequently}}$ hypercyclic subspaces (resp., hypercyclic subspaces) whose existence was established there may contain frequently hypercyclic vectors (resp., ${\mathscr{U}\text{\!-frequently}}$ hypercyclic vectors).

The main contribution of this article is the development of a method for constructing ${\mathscr{U}\text{\!-frequently}}$ hypercyclic subspaces for shifts satisfying \eqref{eq-19-08-I} that contain no frequently hypercyclic vectors. The first and the second applications of this construction enables us to establish that a shift has the properties described in Questions~1 and~2 if and only if its weight satisfies \eqref{eq-19-08-I}. More precisely, our first main result reads as follows.

\begin{theorem} \label{thm-1}
Let $1 \leq p < \infty$ and let $B_{\mathbf{w}} : \ell_p \to \ell_p$ be a weighted backward shift with weight sequence $\mathbf{w} = (\omega_k)_{k\geq 1}$. The following are equivalent:
\begin{enumerate}[noitemsep, topsep=0pt, label=(\roman*), ref=$(\roman*)$]
    \item \label{thm-A-1} The weight $\mathbf{w}$ satisfies \eqref{eq-19-08-I}.
    \item \label{thm-B-1} $B_{\mathbf{w}}$ admits a $\mathscr{U}$\!-frequently hypercyclic subspace not containing frequently hypercyclic vectors.
    \item \label{thm-C-1} $B_{\mathbf{w}}$ is $\mathscr{U}$\!-frequently hypercyclic and admits a hypercyclic subspace not containing ${\mathscr{U}\text{\!-frequently}}$ hypercyclic vectors.
\end{enumerate}
\end{theorem}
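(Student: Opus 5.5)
The implications \ref{thm-B-1}$\Rightarrow$\ref{thm-A-1} and \ref{thm-C-1}$\Rightarrow$\ref{thm-A-1} follow directly from Theorem~\ref{thm-carac}. A $\mathscr{U}$-frequently hypercyclic subspace gives statement \ref{thm-B} of that theorem. Hypercyclicity of $B_{\mathbf{w}}$ together with a hypercyclic subspace gives statement \ref{thm-C}. Either one yields \eqref{eq-19-08-I}. All the work is therefore in \ref{thm-A-1}$\Rightarrow$\ref{thm-B-1} and \ref{thm-A-1}$\Rightarrow$\ref{thm-C-1}. For the latter, $\mathscr{U}$-frequent hypercyclicity of $B_{\mathbf{w}}$ again comes from Theorem~\ref{thm-carac}. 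It remains to build a hypercyclic subspace free of $\mathscr{U}$-frequently hypercyclic vectors. I plan one construction for both, following the Bès--Menet scheme: a basic sequence $(u_k)$ with closed span $M$, plus a mechanism that forces every nonzero $x\in M$ into the desired class.

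The construction runs as follows. The first half of \eqref{eq-19-08-I} makes the vectors $\sum_n (\omega_1\cdots\omega_n)^{-1} y_{N}\, e_{n+N}$, built from a finite target $y$, small in norm. So I can choose disjoint blocks of integers and place on them scaled copies of finitely supported targets; applying $B_{\mathbf{w}}^n$ at the chosen times $n$ then reproduces the targets. The second half of \eqref{eq-19-08-I} gives, for each $n$, positions $k$ where $\prod_{j=1}^n\omega_{k+j}\le 1$. This is Menet's device for putting the vectors $u_k$ on supports where $B_{\mathbf{w}}^n$ does not blow up the other components. It keeps $\|B^n_{\mathbf{w}}(\sum_{j\ne k}a_ju_j)\|$ controlled, and the orbit of $x=\sum a_ku_k$ at the times reserved for the first $k$ with $a_k\neq0$ then approximates the targets. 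I will take each $u_k$ to be a perturbation of $e_{m_k}$ with $m_k$ in those good positions, and keep $(u_k)$ equivalent to the unit basis of $\ell_p$.

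The new ingredient is the control of the density of return times. For \ref{thm-B-1}, the times at which $u_k$ hits its targets will form sets of positive upper density, giving $\mathscr{U}$-frequent hypercyclicity. I will arrange these sets as unions of long intervals $[N_j,(1+c)N_j]$ separated by very long gaps $[(1+c)N_j, N_{j+1}]$ with $N_{j+1}/N_j\to\infty$. On the gaps the orbit of every $x\in M$ must stay away from a fixed open set, say $\|B^n_{\mathbf{w}}x - z\|\ge \delta\|x\|$ for a fixed vector $z$ or ball. Then the lower density of the visits is $0$, so $x$ is not frequently hypercyclic. For \ref{thm-C-1}, the same scheme uses sparse hitting times of density zero, e.g.\ single times $n_j$ with $n_{j+1}\gg n_j$. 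This gives hypercyclicity, while on the complement the orbit avoids a fixed ball, so the upper density of visits is $0$.

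The main obstacle is this avoidance step: the orbit of an \emph{arbitrary} linear combination $x$ must stay outside a fixed ball during the gaps, uniformly in the infinitely many coefficients. My first attempt is to let the orbits on gap times be dominated by a fixed coordinate. For example, I would reserve a coordinate $e_1$ and, for $n$ in a gap, ensure $|\langle B^n_{\mathbf{w}}x,e_1^*\rangle|$ is bounded below by $c\,|a_{k_0}|$, where $k_0$ is the first index with $a_{k_0}\neq 0$, while the tails are small. Such a lower bound does not yet keep the orbit out of a ball containing points of large first coordinate. So I would instead choose the target ball near $0$ in that coordinate, or near a vector with first coordinate of size beyond what occurs, and use homogeneity: normalize $a_{k_0}=1$. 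The geometric growth of the block positions should make the contributions of later $u_j$ summable and negligible. If this fails, the fallback is to avoid the open set $\{y:|y_1|<\varepsilon\}$ only for $\|x\|$-normalized vectors, combined with a cone argument.
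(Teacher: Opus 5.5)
The reductions \ref{thm-B-1}$\Rightarrow$\ref{thm-A-1} and \ref{thm-C-1}$\Rightarrow$\ref{thm-A-1}, and the $\mathscr{U}$-frequent hypercyclicity of $B_{\mathbf{w}}$ in \ref{thm-C-1}, are fine. One slip: in the second reduction, Theorem~\ref{thm-carac} needs the $\mathscr{U}$-frequent hypercyclicity contained in \ref{thm-C-1}, not mere hypercyclicity. Your overall architecture is also the paper's: disjoint blocks carrying weighted copies of a dense set of finite targets, anchors $e_{m_k}$, hitting windows of positive relative density separated by huge gaps, and isolated hitting times for \ref{thm-C-1}.

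The genuine gap is the step you flag yourself. You must show that every nonzero $x$ in the span avoids a fixed open set on a set of times of upper density $1$ (resp.\ lower density $1$). The mechanism you propose cannot work. Suppose that at a common gap time $n$ one had $|\pi_1(B^n_{\mathbf{w}}x)|\ge c\,|a_{k_0}|$. Put $\alpha_n=\pi_1(B^n_{\mathbf{w}}u_1)$ and $\beta_n=\pi_1(B^n_{\mathbf{w}}u_2)$. For $x=u_1+tu_2$ you would need $|\alpha_n+t\beta_n|\ge c$ for every $t\in\mathbb{K}$. This forces $\beta_n=0$, and then $x=u_2$ violates the bound at time $n$. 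The point is that $x\mapsto\pi_1(B^n_{\mathbf{w}}x)$ is a single linear functional, so its kernel always contains nonzero vectors of the subspace. Normalizing $a_{k_0}=1$ does not help, because the other coefficients are unrestricted in size, so later contributions cannot be made uniformly negligible. Your cone-argument fallback hits the same obstruction. Your alternative, a ball centred at a point with large first coordinate, comes with no mechanism bounding $\pi_1(B^n_{\mathbf{w}}x)$ on the gaps.

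The missing idea is to reverse the direction: make the first coordinate \emph{vanish} on the gaps. Since $\pi_1(B^n_{\mathbf{w}}x)=\omega_1\cdots\omega_n\,x_{n+1}$, put all anchors and all target blocks inside one fixed set $\mathcal{M}\subset\mathbb{N}$. Then every $x$ in the closed span is supported in $\mathcal{M}$, and $\pi_1(B^n_{\mathbf{w}}x)=0$ whenever $n+1\notin\mathcal{M}$. This is a linear condition, so it is automatically uniform in the coefficients and stable under limits. At those times $B^n_{\mathbf{w}}x\notin B(e_1,1)$.

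The paper obtains upper density $1$ for $\mathbb{N}\setminus\mathcal{M}$ from \ref{it-A1}: each new window starts after a gap filling a proportion $(k-1)/k$ of $[0,\mathfrak{p}(\psi_k(1))]$. This coexists with a positive upper density of hitting times. For \ref{thm-C-1} it takes $j_k=1$ and imposes \ref{A2}, which gives lower density $1$. Lemmas~\ref{lem-2} and~\ref{lem-1-subs-upper} then finish the argument.

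A smaller point concerns the anchors. To keep $\|B^s_{\mathbf{w}}e_m\|_p$ bounded for every hitting time $s<m$, you need $\prod_{\ell=u}^{m-1}\omega_\ell\le\beta$ for \emph{every} $u$ in a window of prescribed length ending at $m-1$. A bound on the full product $\prod_{j=1}^n\omega_{k+j}$ is not enough, and \eqref{eq-19-08-I} gives that only up to $1+\varepsilon$ anyway. This is exactly what Lemma~\ref{lemma-pesos} supplies.
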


It is clear that the statements \ref{thm-B-1}$\Rightarrow$\ref{thm-A-1} and
\ref{thm-C-1}$\Rightarrow$\ref{thm-A-1} in Theorem~\ref{thm-1}
follow directly from Theorem~\ref{thm-carac}.
Therefore, the essential step in the proof of Theorem~\ref{thm-1}
is to show that the weight conditions in \eqref{eq-19-08-I}
imply \ref{thm-B-1} and \ref{thm-C-1}.
It is worth noting that, by combining the existence of subspaces
having the properties stated in
\ref{thm-B-1} (resp., \ref{thm-C-1}) of Theorem~\ref{thm-1}
with a result of Pe{\l}czy\'nski
(see~\cite[Proposition~I.5.4]{guerre}),
one obtains that for every $\varepsilon > 0$ there exists a $(1+\varepsilon)$-isometric copy of $\ell_p$ also enjoying the conclusion of \ref{thm-B-1} (resp., \ref{thm-C-1}). It just so happens that our proof of Theorem \ref{thm-1} gives this additional information directly, with no need to call on Pe{\l}czy\'nski's theorem.

Now we describe the third application of our construction. Recall that a \emph{common ${\mathscr{U}\textit{\!-frequently}}$ hypercyclic subspace} for a family of operators $\{T_\gamma : X \to X\}_{\gamma \in \Gamma}$ is a closed infinite-dimensional subspace of $X$
whose nonzero vectors are ${\mathscr{U}\text{\!-frequently}}$ hypercyclic for each $T_\gamma$, $\gamma \in \Gamma$. Given $\mu > 1$, consider the weight sequence $\mathbf{w}_\mu = \left(1 +\frac{\mu}{n}\right)_{n\geq1}$ and, for each $1 \leq p < \infty$, consider the family of shifts $\{B_{\mathbf{w}_\mu} \colon \ell_p \to \ell_p\}_{\mu > 1}$. Each shift in this family admits a ${\mathscr{U}\text{\!-frequently}}$ hypercyclic subspace and the family admits a common hypercyclic subspace \cite[Corollaries 3.9 and 4.8]{BesMenet15}.  In \cite[Problem~2]{BesMenet15}, B\`es and Menet asked whether or not this family of shifts  admits a common ${\mathscr{U}\text{\!-frequently}}$ hypercyclic subspace. Next we state our second main result, which, in particular, solves this problem affirmatively. 

\begin{theorem} 
\label{thm-2}
Let $1 \leq p < \infty$, and for each $\mu > 1$, let $B_{\mathbf{w}_\mu} : \ell_p \to \ell_p$ denote the weighted backward shift with weight $\mathbf{w}_\mu = \left(1 +\frac{\mu}{n}\right)_{n\geq1}$. Then the family $\{B_{\mathbf{w}_\mu}\}_{\mu > 1}$
admits a common ${\mathscr{U}\text{\!-frequently}}$ hypercyclic subspace containing no vector that is frequently hypercyclic for $B_{\mathbf{w}_\mu}$ for any $\mu > 1$.
\end{theorem}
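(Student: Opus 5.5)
We build the common subspace as the closed span of a normalized block basic sequence $(u_j)_{j\ge1}$ in $\ell_p$ with pairwise disjoint, rapidly separated supports. For $w_\mu$ one has $\omega_1\cdots\omega_k \asymp k^{\mu}$, so $\sum_k(\omega_1\cdots\omega_k)^{-p}<\infty$ for every $\mu>1$ and every $p\ge1$, and $\prod_{j=1}^n\omega_{k+j}\to1$ as $k\to\infty$; hence \eqref{eq-19-08-I} holds for each $\mu$. The plan is to prove it uniformly in $\mu$. We first reduce to a countable dense set $\{\mu_m\}\subset(1,\infty)$ and to a countable dense set $\{y_l\}$ of finitely supported vectors. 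For each pair $(\mu_m,y_l)$, and for each "scale", we then arrange that every $u_j$ carries blocks of the form $\sum_{n\in A}B_{\mathbf w_\mu}^{\text{-right}}{}^{\,n} y$, where $F_{\mathbf w}$ denotes the forward shift inverse to $B_{\mathbf w}$. These blocks are placed on sets $A$ of positive upper density. The key monotonicity fact is that $\prod_{j=k+1}^{k+n}(1+\mu/j)$ is increasing in $\mu$ and continuous uniformly on compacts. So a block designed for $\mu_m$ produces, under $B_{\mathbf w_\mu}^{n}$, a vector close to $c\,y_l$ with $c$ near $1$ whenever $\mu$ is near $\mu_m$ and the block sits far enough out, since there the ratio of products tends to $1$. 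This handles the whole interval around $\mu_m$ and yields a common construction.

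Step two is the $\mathscr U$-frequent hypercyclicity of every nonzero $x=\sum_j a_ju_j$. Following the Bès–Menet scheme, we let each $u_j$ have its "active windows" in long intervals $[N_r,(1+\delta)N_r]$, with $N_r$ growing super-exponentially. On window $r$, one prescribed $j=j(r)$ is active, and every $j$ recurs infinitely often with every target. For $x$ nonzero, we pick $j_0$ with $|a_{j_0}|$ maximal among, say, the first coordinates; normalization lets us assume $a_{j_0}=1$. On windows assigned to $j_0$ and target $y/a$, we get $B^n x\approx y$ for $n$ in a set of relative density $\gg\delta$. The contributions of the other $u_j$ are made small by two devices. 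The tails of the series are controlled by $\sum_k(\omega_1\cdots\omega_k)^{-p}<\infty$ uniformly for $\mu\ge\mu_0>1$: on compacts $\mu\in[\mu_0,M]$ these series are dominated by the $\mu_0$ one. The remaining interference is controlled by the second condition in \eqref{eq-19-08-I}, which lets the other blocks be shifted without blowing up, because $\omega_{k+1}\cdots\omega_{k+n}$ is bounded by $(1+\mu/k)^n\le e^{\mu n/k}$ for $k\gg n$. The standard triangle-inequality argument then controls the sum of the remaining coefficients. The isomorphism to $\ell_p$ comes from disjoint supports.

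Step three is to exclude frequent hypercyclicity for every $\mu$. Between windows we insert huge gaps $[(1+\delta)N_r, N_{r+1}]$ during which every $u_j$ is designed so that $B_{\mathbf w_\mu}^n x$ has first coordinate tiny. This means $x$ has essentially no mass at the indices $n+1$ for $n$ in these gaps, apart from controlled tails. Hence the set $\{n: |(B^nx)_1 - 1|<1/2\}$ is contained, up to a density-zero error, in $\bigcup_r[N_r,(1+\delta)N_r]$, which has lower density $0$ when $N_{r+1}/N_r\to\infty$. So $x$ is not frequently hypercyclic, for every $\mu>1$ simultaneously, since the support condition is independent of $\mu$. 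I expect the main obstacle to be the uniformity in $\mu$ across the whole half-line $(1,\infty)$ in step two. In particular, as $\mu\downarrow1$ the constants in the tail estimates degenerate, and as $\mu\to\infty$ the weights near the origin blow up. I would handle this by exhausting $(1,\infty)$ with compacts $[1+1/m,m]$, dedicating to the $m$-th compact only windows with $N_r$ large relative to $m$, and choosing coefficients depending on $m$ so that errors are summable.
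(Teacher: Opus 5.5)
\paragraph{The gap: the passage from $\{\mu_m\}$ to all $\mu>1$.} This passage rests on your ``key monotonicity fact'', and that step fails. Suppose a block designed for $\mu_m$ has entry $y_s/\prod_{\ell=s}^{n+s-1}(1+\mu_m/\ell)$ at index $n+s$, for $1\le s\le n^*$. Then $B_{\mathbf w_\mu}^n$ maps it to $\sum_s c_s(n)\,y_s\,e_s$, where
\[
c_s(n)=\prod_{\ell=s}^{n+s-1}\frac{\ell+\mu}{\ell+\mu_m}=\frac{\Gamma(n+s+\mu)\,\Gamma(s+\mu_m)}{\Gamma(s+\mu)\,\Gamma(n+s+\mu_m)}\sim\frac{\Gamma(s+\mu_m)}{\Gamma(s+\mu)}\,n^{\mu-\mu_m}\qquad(n\to\infty).
\]
This product runs from the origin up to the block. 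It is not the product over a short window far out, which is the one that tends to $1$. So for $\mu\neq\mu_m$ it tends to $0$ or $\infty$ as the block moves right, and $c_s(n)\approx 1$ only when $|\mu-\mu_m|\log n$ is small.

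Consequences:
\begin{itemize}
\item A block at position $n$ serves only an interval of parameters of length of order $1/\log n$. Placing blocks ``far enough out'' therefore works against you.
\item To serve a fixed $\mu\notin\{\mu_m\}$ with positive upper density, you would need infinitely many distinct $\mu_m$ whose blocks sit at positions $n$ with $|\mu-\mu_m|\log n$ small. Nothing in your scheme provides this.
\item Exhausting $(1,\infty)$ by compacts $[1+1/m,m]$ only deals with degeneration at the endpoints. The real obstacle is this local loss of resolution.
\item The same mismatch undermines your interference bound. For blocks following the active one in the same window, $B^n_{\mathbf w_\mu}$ shifts by an amount comparable to their position $n'$. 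The weight product is then about $(n'/(n'-n))^{\mu}$, not $e^{\mu n/k}$ with $k\gg n$. These terms are small only because of the spacing of the blocks and because $\mu$ lies within $O(1/\log n)$ of the design parameter. The second condition in \eqref{eq-19-08-I} plays no role there.
\end{itemize}

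\paragraph{How the paper handles this, and what is still missing.} The paper is organized around exactly this issue.
\begin{itemize}
\item Each group $\widetilde\psi_k$ ($k$ even) is built for one rational parameter $\rho_2(k)$ and one target.
\item For fixed $\mu$ and $W$, continuity of $\Lambda_r$ and $\Gamma_r^m$ gives a radius $\delta_r$. These functions are defined through the actual positions of the $r$-group carrying $y_1$, so $\delta_r$ depends on those positions.
\item The proof then uses infinitely many distinct rationals $v_k$ with $\mu\in(v_k-\delta_{v_k},v_k+\delta_{v_k})$, each contributing one group.
\item The spacing inside groups is $\theta_{(n^*+1)/n^*}(n^*)$, independent of $r$. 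This gives $\overline{\mathrm d}(\widetilde{\mathcal S})\ge 1/(2\theta_{(n^*+1)/n^*}(n^*))$.
\item The gaps \ref{it-C3}, with the factor $(2\|\mathbf w_r\|_\infty)^{np}$ built into $\theta_r$, absorb the amplification $2^{s'}$ of later groups.
\end{itemize}

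Be aware, however, that the existence of such $(v_k)$ for every $\mu$ is itself the crux. It does not follow from the density of $\mathbb Q_{>1}\setminus F$ alone. Let $k_r=\widetilde\varphi(n^*,r,i_r)$. By the computation above, $\delta_r$ is at most a constant (depending on $[a,b]$) times $1/\log\mathfrak p(\widetilde\psi_{k_r}(1))$. Condition \ref{it-A3}, together with the exponential size of $\theta_r$ forced by \eqref{eq-12-08-A}, makes these radii summable over $r\in\mathbb Q\cap[a,b]$. By Borel--Cantelli, almost every $\mu$ then lies in only finitely many of the intervals $(r-\delta_r,r+\delta_r)$.

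So the missing idea, both in your proposal and at the step \eqref{eq-19-08-D}, is a mechanism that adapts the design parameters to the $1/\log n$ resolution at each scale, so that every $\mu>1$ is served with positive upper density. One natural candidate is to let the design parameter drift slowly, on a $\log\log n$ scale, along long groups of blocks. Then each $\mu$ in a compact interval is matched on a window of bounded multiplicative length.
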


The article is organized as follows. Section~\ref{Sec-2} introduces the notations and background lemmas describing conditions under which a vector does not exhibit ${\mathscr{U}\text{\!-frequently}}$ hypercyclic or frequently hypercyclic behavior for an operator, along with technical results on bounded sequences of positive real numbers. These tools are essential for the proof of Theorem~\ref{thm-1}. Sections \ref{Sec-4} and~\ref{Sc-3} present, respectively, the proofs of the implications \ref{thm-A-1}$\Rightarrow$\ref{thm-B-1} and \ref{thm-A-1}$\Rightarrow$\ref{thm-C-1} in Theorem \ref{thm-1}. Finally, Section~\ref{Sec-5} provides a detailed proof of Theorem~\ref{thm-2}.

\section{Background and Notation} \label{Sec-2}
\subsection{Frequent and ${\mathscr{U}\text{\!-frequent}}$ Hypercyclicity} Let $\mathbb{N}$ denote the set of strictly positive integers and let $\mathbb{N}_0 := \mathbb{N} \cup \{0\}$. Given a finite set $S \subset \mathbb{N}_0$, the symbol $|S|$ denotes the cardinality of $S$. The \textit{lower} and \textit{upper asymptotic density} of $A \subset \mathbb{N}_0$ are defined, respectively, by
$$\underline{\mathrm{d}}(A) := \liminf_{N \to \infty}\dfrac{|A \cap [N]|}{N+1} \ \ \mbox{and}  \ \ \overline{\mathrm{d}}(A) := \limsup_{N \to \infty}\dfrac{|A \cap [N]|}{N+1},$$
where $[N] := \{0,1,\ldots,N\}$. Let $X$ be a Banach space and let $T : X \to X$ be a bounded linear operator. The \textit{return set} from $x$  to $E \subset X$ under $T$ is defined by $\mathcal{N}_T(x,E) := \{n \in \mathbb{N}_0 : T^nx \in E\}.$ A vector $x \in X$ is called \textit{frequently hypercyclic}  (resp. \textit{${\mathscr{U}\textit{\!-frequently}}$ hypercyclic}) for $T$ if $\underline{\mathrm{d}}(\mathcal{N}_T(x,U)) > 0$ (resp. $\overline{\mathrm{d}}(\mathcal{N}_T(x,U)) > 0$) for every non-empty open subset $U$ of $X$. In this case, we say that $T$ is \textit{frequently hypercyclic} (resp. \textit{${\mathscr{U}\textit{\!-frequently}}$ hypercyclic}). We denote by $\mathrm{HC}(T),\, \mathrm{UFHC}(T)$, and $\mathrm{FHC}(T)$ the sets of hypercyclic, of upper frequently hypercyclic, and of frequently hypercyclic vectors for $T$, respectively.

For each $k \in \mathbb{N}$, let $\pi_k : \mathbb{K}^{\mathbb{N}} \to \mathbb{K}$ be given by $\pi_k((z_n)_{n \geq 1}) = z_k$. By $(e_k)_{k \geq 1}$ we denote the canonical unit vectors of $\ell_p$. To show that some vectors are not $\mathscr{U}$-frequently hypercyclic and others are not frequently hypercyclic for an operator on $\ell_p$, we shall use the following two simple lemmas.

\begin{lemma} \label{lem-1-subs-upper}
	Let $T \colon \ell_p \to \ell_p$ be a bounded linear operator, $1 \leq p < \infty$,  and let $x_0 \in \ell_p$ be given. If there exists $k \in \mathbb N$ such that
	\begin{align} \label{eq-aux-1-subs-upper}
		\underline{\mathrm{d}}(\{n \in \mathbb N : \pi_k(T^n x_0) = 0\}) = 1,
	\end{align}
	then $x_0$ is not a ${\mathscr{U}\text{\!-frequently}}$ hypercyclic vector for $T$.
\end{lemma}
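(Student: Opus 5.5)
The plan is to find an explicit non-empty open set $U$ whose return set from $x_0$ has upper density zero. Since the return set will sit inside the complement of the set in \eqref{eq-aux-1-subs-upper}, this contradicts the definition of $\mathscr{U}$-frequent hypercyclicity. Fix $k$ as in the hypothesis and put
\[
U := \{ y \in \ell_p : |\pi_k(y) - 1| < \tfrac12 \}.
\]
This set is open because $\pi_k$ restricted to $\ell_p$ is a continuous linear functional, since $|\pi_k(y)| \le \|y\|_p$. It is non-empty because $e_k \in U$.

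Next we observe that every $y \in U$ satisfies $|\pi_k(y)| > \tfrac12$, so $\pi_k(y) \neq 0$. Writing $A := \{n \in \mathbb{N} : \pi_k(T^n x_0) = 0\}$, we therefore get
\[
\mathcal{N}_T(x_0,U) \subset \mathbb{N}_0 \setminus A .
\]
The return set may contain $0$, but a single point does not affect densities.

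It remains to compute the upper density of the complement. For each $N$,
\[
\frac{|(\mathbb{N}_0\setminus A)\cap[N]|}{N+1} = 1 - \frac{|A\cap[N]|}{N+1}.
\]
Taking $\limsup$ gives $\overline{\mathrm{d}}(\mathbb{N}_0\setminus A) = 1 - \underline{\mathrm{d}}(A) = 0$. By monotonicity of upper density, $\overline{\mathrm{d}}(\mathcal{N}_T(x_0,U)) = 0$, so $x_0$ is not $\mathscr{U}$-frequently hypercyclic for $T$.

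There is no real obstacle here. The only points needing care are the continuity of $\pi_k$ on $\ell_p$, which makes $U$ open, and the identity $\overline{\mathrm{d}}(A^c) = 1 - \underline{\mathrm{d}}(A)$.
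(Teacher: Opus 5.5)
Your proof is correct and follows the paper's argument. The paper uses the ball $B(e_k,1)$ where you use the slab $\{y : |\pi_k(y)-1|<\tfrac12\}$; both are open neighbourhoods of $e_k$ that miss the hyperplane $\{\pi_k=0\}$, and the rest is the same complement computation $\overline{\mathrm{d}}(\mathbb{N}_0\setminus A)=1-\underline{\mathrm{d}}(A)=0$.
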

\begin{proof}
It follows directly from \eqref{lem-1-subs-upper} and the definition of the lower asymptotic density that
\begin{align} \label{eq-26-09}
\dfrac{\big|\big\{n \in \mathbb N : \pi_k(T^n x_0) = 0\} \cap \big[N\big]\big|}{N+1} \xrightarrow{N \to \infty} 1.
\end{align}
Let $B(e_k,1)$ be the open ball in $\ell_p$ centered at $e_k$ and of radius $1$. For any $N \in \mathbb{N}$, we have
	\begin{align}
		\begin{split} \label{eq-31-07-C}
		&\dfrac{\big|\mathcal{N}_T(x_0, B(e_k,1)) \cap \big[N\big]\big|}{N+1} = 1 - \dfrac{\big|\mathcal{N}_T(x_0,B(e_k,1))^c \cap \big[N\big]\big|}{N+1} \\ &\leq 1 - \dfrac{\big|\big\{n \in \mathbb N : \pi_k(T^n x_0) = 0\} \cap \big[N\big]\big|}{N+1} \xrightarrow{N \to \infty} 0. \quad (\mathrm{by \, \, applying} \, \, \, \eqref{eq-26-09})
		\end{split}
	\end{align}
	This shows that $\overline{\mathrm{d}}(\mathcal{N}_T(x_0,B(e_k,1))) = 0$, hence $x_0$ is not ${\mathscr{U}\text{\!-frequently}}$ hypercyclic  for $T$.
\end{proof}

\begin{lemma} \label{lem-2}
		Let $T \colon \ell_p \to \ell_p$ be a bounded linear operator, $1 \leq p < \infty$,  and let $x_0 \in \ell_p$ be given. If there exists $k \in \mathbb N$ such that
		\begin{align} \label{eq-31-07-A}
			\overline{\mathrm{d}}(\{n \in \mathbb N : \pi_k(T^n x_0) = 0\}) = 1,
		\end{align}
		then $x_0$ is not a frequently hypercyclic vector for $T$.
	\end{lemma}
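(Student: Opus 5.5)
We argue exactly as in the proof of Lemma~\ref{lem-1-subs-upper}, with the roles of lower and upper density interchanged. Write $A := \{n \in \mathbb N : \pi_k(T^n x_0) = 0\}$ and consider the open ball $B(e_k,1)$ centered at $e_k$ with radius $1$. The key observation is that every vector $y$ with $\pi_k(y) = 0$ lies outside $B(e_k,1)$, because
$$\|y - e_k\|_p \geq |\pi_k(y) - 1| = 1.$$
Consequently $A \subset \mathcal{N}_T(x_0, B(e_k,1))^c$, and hence $\mathcal{N}_T(x_0,B(e_k,1)) \subset \mathbb{N}_0 \setminus A$.

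We then pass to densities. For every $N \in \mathbb N$,
$$\frac{\big|\mathcal{N}_T(x_0,B(e_k,1)) \cap [N]\big|}{N+1} \leq 1 - \frac{|A \cap [N]|}{N+1} + \frac{1}{N+1}.$$
The extra term $\frac{1}{N+1}$ accounts for the index $0$, which belongs to $\mathbb{N}_0$ but is excluded from $A$ since $A \subset \mathbb N$; it tends to $0$ and plays no role. Taking $\liminf_{N\to\infty}$ on both sides and using the identity
$$\liminf_{N\to\infty}\Big(1 - a_N\Big) = 1 - \limsup_{N\to\infty} a_N,$$
we obtain
$$\underline{\mathrm{d}}\big(\mathcal{N}_T(x_0,B(e_k,1))\big) \leq 1 - \overline{\mathrm{d}}(A) = 0,$$
where the last equality is hypothesis \eqref{eq-31-07-A}.

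Thus there is a non-empty open set, namely $B(e_k,1)$, whose return set from $x_0$ has lower density zero, so $x_0 \notin \mathrm{FHC}(T)$. Nothing here is a genuine obstacle. The only step requiring care is the passage from the limsup of $A$ to the liminf of its complement, via the identity above, since here we cannot use a limit as in \eqref{eq-26-09}.
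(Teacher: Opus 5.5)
Your proof is correct and is essentially the paper's argument: the paper extracts a sequence $(N_j)$ along which $|A\cap[N_j]|/(N_j+1)\to 1$ and bounds the return-set ratio along it, which is your identity $\liminf(1-a_N)=1-\limsup a_N$ made explicit. The extra $\frac{1}{N+1}$ term is harmless but unnecessary, since $\mathcal{N}_T(x_0,B(e_k,1))\subset\mathbb{N}_0\setminus A$ already gives $|\mathcal{N}_T(x_0,B(e_k,1))\cap[N]|\le N+1-|A\cap[N]|$.
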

	\begin{proof}
	From \eqref{eq-31-07-A}, there exists an increasing sequence $(N_{j})_{j \geq 1}$ in $\mathbb{N}$ such that
		\begin{align} \label{eq-31-07-B}
			\dfrac{\big|\{n \in \mathbb{N} : \pi_k(T^n x_0) = 0\} \cap \big[N_j\big]\big|}{N_j+1} \xrightarrow{j \to \infty} 1.
		\end{align}
		By using \eqref{eq-31-07-B} and arguing similarly to \eqref{eq-31-07-C}, we can prove that
		\begin{align*}
			\dfrac{\big|\mathcal{N}_T(x_0, B(e_k,1)) \cap \big[N_j\big]\big|}{N_j+1}  &\xrightarrow{j \to \infty} 0.
		\end{align*}
		Thus $\underline{\text{d}}(\mathcal{N}_T(x_0, B(e_k, 1))) = 0$, so $x_0$ is not frequently hypercyclic for $T$.
	\end{proof}

    \subsection{A Technical Lemma}

    To simplify notation, for any bounded subset $S \subset \mathbb{N}$, we set $\mathfrak{p}(S) := \min S$ and $\mathfrak{g}(S) := \max S$. Moreover, an \textit{interval} in $\mathbb{N}$ is a set of the form $[m,n] \cap \mathbb{N}$ with $m,n \in \mathbb{N}$ and $m \leq n$. The following sublemma will be needed to prove the technical lemma of this subsection.

    \begin{sublemma} \label{sublemma-1}
Let $n \in \mathbb{N}$ and let $(\delta_j)_{j=1}^{2^n} \subset [-1,1]$. If $n \geq 7$ and
\begin{equation} \label{eq-02-08-D}
\delta_1 + \delta_2 + \cdots + \delta_{2^n} \leq 1,
\end{equation} 
then there exists an interval $I_n$ in $\mathbb{N}$ such that
\begin{equation} \label{eq-02-08-A}
I_n \subset [1, 2^n], 
\quad |I_n| \geq n, 
\quad \text{and} \quad 
\max_{\mathfrak{p}(I_n) \leq k \leq \mathfrak{g}(I_n)} \, \sum_{j=k}^{\mathfrak{g}(I_n)} \delta_j \leq 1.
\end{equation}
\end{sublemma}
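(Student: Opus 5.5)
The plan is to pass to partial sums and run a backward jumping argument. Put $P_0:=0$ and $P_m:=\delta_1+\cdots+\delta_m$ for $1\le m\le 2^n$. Then $|P_m-P_{m'}|\le |m-m'|$, and in particular $P_m\ge -m$. Hypothesis \eqref{eq-02-08-D} says exactly $P_{2^n}\le 1$. For an interval $I=[b-n+1,b]$ with $n\le b\le 2^n$ we have $I\subset[1,2^n]$ and $|I|=n$. Moreover,
$$\max_{b-n+1\le k\le b}\ \sum_{j=k}^{b}\delta_j=P_b-\min_{b-n\le i\le b-1}P_i .$$
So it suffices to find $b\in[n,2^n]$ with $P_b\le 1+P_i$ for every $i\in[b-n,b-1]$. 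Call such a $b$ \emph{good}.

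Construct indices $b_0>b_1>\cdots$ as follows. Start with $b_0:=2^n$. Given $b_r\ge n$, stop if $b_r$ is good; in that case $I_n:=[b_r-n+1,b_r]$ satisfies \eqref{eq-02-08-A}. If $b_r$ is not good, choose $b_{r+1}\in[b_r-n,b_r-1]$ with $P_{b_{r+1}}<P_{b_r}-1$. By induction this gives
$$P_{b_r}<1-r \quad\text{and}\quad b_r\ge 2^n-rn .$$
Since the $b_r$ strictly decrease, the process ends. Either it stops at a good $b_r\ge n$, which is what we want, or it reaches an index $b_r<n$.

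It remains to rule out the second outcome, and this is the only point where $n\ge 7$ is used. Suppose $b_r<n$ for the first time. Then $2^n-rn\le b_r<n$, so $r>(2^n-n)/n$. For $n\ge7$ this gives $r>(2^n-n)/n\ge n+1$; indeed $2^n\ge n^2+2n$ holds for $n\ge 4$. On the other hand, $P_{b_r}\ge -b_r>-n$ together with $P_{b_r}<1-r$ forces $r<n+1$. These two bounds contradict each other.

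The argument is elementary, and the main thing to check is the bookkeeping. The window $[b-n,b-1]$ must lie in $[0,2^n]$, which holds because we only test $b_r\ge n$, and the index $i=0$ with $P_0=0$ is allowed. The resulting interval $[b-n+1,b]$ must lie in $[1,2^n]$, which also holds for $n\le b\le 2^n$. Finally, we need the counting inequality $2^n\ge n^2+2n$, which is where the threshold on $n$ enters; for $n\ge7$ there is ample room.
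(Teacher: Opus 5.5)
Your proof is correct and follows essentially the paper's argument: the paper also builds, by contradiction, a backward chain $s_0=2^n+1>s_1>\cdots$ of violating windows of length $n$ (your $b_r$ is its $s_r-1$), with each block $\sum_{j=s_{k+1}}^{s_k-1}\delta_j$ exceeding $1$, and plays the resulting lower bound against $\sum_j\delta_j\le 1$ and $\delta_j\ge -1$; you phrase it with partial sums and run the chain down to $b_r<n$ rather than stopping once $s_m<2n$, which makes the final count slightly more economical. There are two harmless slips: for $r=0$ you only have $P_{b_0}\le 1$ (the strict bound $P_{b_r}<1-r$ holds for $r\ge 1$, which is all you use), and $2^n\ge n^2+2n$ fails for $n=4,5$, though it holds for all $n\ge 6$ and hence in the range $n\ge 7$ you need.
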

\begin{proof}
For the sake of contradiction, assume that there is no interval $I_{n}$ satisfying \eqref{eq-02-08-A}. In particular, there exists $s_1 \in \mathbb{N}$ such that
$$
n \leq 2^{n} - n < s_1 \leq 2^{n} 
\quad \text{and} \quad 
\sum_{j=s_1}^{2^{n}} \delta_{j} > 1.
$$
If $s_1 \geq 2n$, then we can further find $s_2 \in \mathbb{N}$ such that
$$
2^{n} - 2  n < s_1 - n \leq s_2  \leq s_1 - 1 \quad \text{and} \quad \sum_{j = s_2}^{s_1-1} \delta_j > 1.
$$
Now, if once again $s_2 \geq 2n$, then we can find $s_3 \in \mathbb{N}$ such that
$$
2^{n} - 3  n < s_2-n \leq s_3 \leq s_2 - 1 \quad \text{and} \quad \sum_{j = s_3}^{s_2-1} \delta_j > 1.
$$
We continue choosing $s_k \in \mathbb{N}$ in this manner until we obtain some $s_m \in \mathbb{N}$ such that
\begin{align} \label{eq-23-08-2051}
n \leq s_m < 2n.
\end{align}
In the end, we obtain $(s_k)_{k=0}^{m}$ in $\mathbb{N}$ satisfying the following properties:
\begin{align} \label{eq-02-08-B}
2^{n} - (k+1) \cdot n \leq s_{k+1}, \ \  0 < s_k - s_{k+1} \leq n, \ \  \mbox{and} \ \ \sum_{j=s_{k+1}}^{s_{k}-1} \delta_j  > 1
\end{align}
for each $k = 0, 1, \ldots, m-1$, where $s_0 := 2^{n}+1$. From \eqref{eq-23-08-2051} and \eqref{eq-02-08-B}, we have
\begin{align} \label{eq-23-08-2123}
	2^{n} - m \cdot n \leq s_m < 2n \Longrightarrow m > \dfrac{2^{n}}{n}-2.
\end{align}
Thus,
\begin{align*}
	\sum_{j=s_m}^{2^{n}} \delta_j = \sum_{k=0}^{m-1} \sum_{j=s_{k+1}}^{s_k-1} \delta_j \stackrel{\eqref{eq-02-08-B}}{\geq} m \stackrel{\eqref{eq-23-08-2123}}{>} \frac{2^{n}}{n} - 2.
\end{align*}
From this, and using the fact that $\delta_j \geq -1$, we obtain
\begin{align*}
	\sum_{j=1}^{2^{n}} \delta_j \stackrel{\eqref{eq-23-08-2051}}{\geq} - 2 n + \sum_{j=s_m}^{2^{n}} \delta_j \geq -2 n + \frac{2^{n}}{n} - 2 \stackrel{(\mathrm{by} \, n \geq 7)}{\geq} 2.
\end{align*}
This contradicts \eqref{eq-02-08-D}, and thus the proof is complete.
\end{proof}

	\begin{lemma} \label{lemma-pesos}
		Let ${\bf w} = {(\omega_k)}_{k \geq 1}$ be a bounded sequence in $\mathbb{R}_{>0}:=(0,+\infty)$. Suppose that
		\begin{align} \label{eq-31-07-D}
			\sup_{n \geq 1} \inf_{k \geq 1} \prod_{j = 1}^n \omega_{k+j} \leq 1.
		\end{align}
		Then, there exists $\beta \geq 1$ such that, for any $N \in \mathbb{N}$, the set
		\begin{align} \label{eq-31-07-E}
			\mathcal{A}^{\mathbf{w}}_{N, \beta} := \bigg\{k \in \mathbb{N} : \max_{k < u \leq k+N} \prod_{j = u}^{k+N} \omega_j  \leq \beta \bigg\}
		\end{align}
		is infinite.
	\end{lemma}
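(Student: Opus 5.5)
We will prove the statement by applying Sublemma~\ref{sublemma-1} to logarithms of the weights. Let $M:=\sup_k \omega_k<\infty$. Choose a scale $c>0$ with $c\,|\log \omega_j|$ small; more precisely, set $L:=\max\{\log M,\,1\}$. For a lower bound we only use that $\delta_j\geq -1$ after truncation, as explained below. Fix $n\geq 7$. Condition \eqref{eq-31-07-D} with the product length $2^n$ gives some $k_n\geq 1$ with $\prod_{j=1}^{2^n}\omega_{k_n+j}\leq 2$ (since the infimum is $\leq 1$). The choice of $2$ is arbitrary; any constant $>1$ works, and taking $k_n$ with arbitrarily large values will matter later.

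Define $\delta_j:=\max\{\log\omega_{k_n+j},\,-L\}/L$ for $1\leq j\leq 2^n$. Then $\delta_j\in[-1,1]$, since $\log\omega_{k_n+j}\leq\log M\leq L$. The truncation only increases terms, so the sum constraint is the delicate point. We handle it by noting that $\delta_j\geq \log\omega_{k_n+j}/L$ always, and splitting the problem in two. Rather than bound $\sum\delta_j$ directly, we reprove the sublemma's mechanism in the form we need. Assume no interval $I\subset[1,2^n]$ of length $\ge n$ has all tail sums $\sum_{j=k}^{\mathfrak g(I)}\delta_j\le 1$. The greedy backward construction in the proof of Sublemma~\ref{sublemma-1} then produces $\gtrsim 2^n/n$ disjoint consecutive blocks, each of length $\le n$, on which $\sum\delta_j>1$. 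On such a block, $\delta_j$ differs from $\log\omega/L$ only where truncation occurred. To avoid this issue, we instead use the cleaner choice of applying the sublemma to $\delta_j:=\log\omega_{k_n+j}/L'$ with $L'$ large enough that $|\log\omega_j|\le L'$. This requires $\inf\omega>0$, which fails in general.

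So the honest plan is as follows. Apply the sublemma to the truncated $\delta_j$, and verify the hypothesis $\sum\delta_j\le1$ by choosing $k_n$ so that the untruncated sum is $\le \log 2/L$ and the truncated positions contribute in a controlled way. If truncation spoils the hypothesis, we bypass it: a very small weight ($\log\omega<-L$) only helps, because products passing through it become small. Concretely, we run the greedy argument directly. Any tail product $\prod_{j=u}^{k+N}\omega_j$ that includes a truncated index is at most $\prod(\text{untruncated part})\cdot e^{-L}$. This is the step I expect to be the main obstacle, and I would first try simply to re-run the contradiction argument of the sublemma with $\log\omega_j/L$ in place of $\delta_j$. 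The only place $\delta_j\ge-1$ is used there is the bound $\sum_{j<s_m}\delta_j\ge-2n$ over fewer than $2n$ indices. That bound can instead be obtained by choosing, via \eqref{eq-31-07-D} applied with $n'=2^n-s_m+1$, a starting point that ignores the initial segment. Equivalently, we apply the hypothesis to the terminal stretch only, so the negative initial part never enters.

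Once an interval $I_n$ of length $\ge n$ is found with $\max_{k\le u}\sum_{j=u}^{\mathfrak g(I_n)}\delta_j\le1$, unwinding gives $\prod_{j=u}^{\mathfrak g(I_n)}\omega_{k_n+j}\le e^{L}=:\beta$ for all $u$ in $I_n$. Given $N$, take any $n\ge\max\{N+1,7\}$ and set $k:=k_n+\mathfrak g(I_n)-N$. Then $k\geq k_n+\mathfrak p(I_n)-1$, so the range $k<u\le k+N$ lies inside $k_n+I_n$, and $k\in\mathcal A^{\mathbf w}_{N,\beta}$. Since $\beta$ does not depend on $n$ or $N$, and $n$ can be taken arbitrarily large with $k_n+\mathfrak g(I_n)\ge$ anything, we get infinitely many such $k$. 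To ensure this, we choose $k_n$ beyond any prescribed bound, using that $\inf_{k\ge K}$ of the products is still $\le 1$; this holds because the finitely many initial $k$ can be absorbed by applying \eqref{eq-31-07-D} to longer products and shifting. Finally, $\beta\ge1$ holds since $L\ge 1$.
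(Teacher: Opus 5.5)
Your argument is complete only when $\inf_k\omega_k>0$. In that case your ``cleaner choice'' $\delta_j=\log\omega_{k_n+j}/L'$, with $L'\ge\max\{\log\|\mathbf{w}\|_\infty,\,-\log\inf_k\omega_k,\,1\}$, lies in $[-1,1]$ and satisfies $\sum_j\delta_j\le\log 2/L'\le 1$. Sublemma~\ref{sublemma-1} then gives the result with $\beta=e^{L'}$. This is essentially the paper's second case, where $\beta=\max\{\|\mathbf{w}\|_\infty,(\inf_k\omega_k)^{-1}\}$. Note that your $\beta=e^{L}$ with $L=\max\{\log M,1\}$ ignores the lower end and must be enlarged.

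The case $\inf_k\omega_k=0$, which you yourself flag, is not handled, and neither remedy you sketch works.
\begin{itemize}
\item Truncating at $-L$ destroys the hypothesis $\sum_j\delta_j\le 1$ of the sublemma. A single tiny weight keeps $\prod_j\omega_{k_n+j}\le 2$, while the truncated sum can be of order $2^n$.
\item The bound ``a tail product through a truncated index is at most (untruncated part)$\,\cdot e^{-L}$'' does not give $\le e^{L}$, since the untruncated part can be as large as $M^{N-1}$.
\item The proposed repair of the greedy argument is circular. On the block $[k_n+1,k_n+2^n]$ the chain only gives $\prod_{j=s_m}^{2^n}\omega_{k_n+j}>e^{mL}$. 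Combined with $\prod_{j=1}^{2^n}\omega_{k_n+j}\le 2$, this merely says that the initial stretch of fewer than $2n$ weights has product $<2e^{-mL}$, which is no contradiction. Invoking the hypothesis again with length $n'=2^n-s_m+1$ produces some other block, about which the chain says nothing. Re-running the chain there creates a new uncontrolled initial stretch.
\end{itemize}

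The missing idea is that a small weight settles the matter on its own once it is compared with an $N$-dependent threshold; $\beta$ itself need not depend on $N$. The paper splits into cases. If $\inf_k\omega_k=0$, positivity of each $\omega_k$ gives indices $K\to\infty$ with $\omega_K\to 0$. Any such $K$ with $\omega_K\le\beta\max\{1,\|\mathbf{w}\|_\infty\}^{-(N-1)}$ gives $K-N\in\mathcal{A}^{\mathbf{w}}_{N,\beta}$ directly, because every tail product ending at $K$ contains $\omega_K$ and at most $N-1$ further factors. No sublemma is needed, and any $\beta$ works.

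Your own greedy setup could be rescued the same way. The initial stretch with product $<2e^{-mL}$, where $m>2^n/n-2$, contains a weight smaller than $(2e^{-mL})^{1/(2n)}$. For fixed $N$ and $n$ large, this lies below that threshold. But this dichotomy is not in your proposal.

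Finally, the claim in your last paragraph that $k_n$ can be chosen beyond any bound also needs the case split. The paper proves the corresponding statement, that $\{k:\prod_{j=k+1}^{k+2^N}\omega_j\le\beta\}$ is infinite, by bounding an initial stretch from below by $(\inf_k\omega_k)^{k_0-1}$. That bound is unavailable when $\inf_k\omega_k=0$; there, tiny weights far out do the job instead.
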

	\begin{proof}
		First, assume that $\inf\limits_{k \geq 1} \omega_k = 0,$ and fix an arbitrary $\beta > 0$.  Let us now prove that $\mathcal{A}^{\mathbf{w}}_{N, \beta}$ is infinite for any $N \in \mathbb{N}$. Since $\inf\limits_{k \geq 1} \omega_k = 0$ clearly implies that $\mathcal{A}^{\mathbf{w}}_{1, \beta}$ is infinite, we may assume $N \geq 2$. Thus, assuming $N \geq 2$, let us now prove that the set $\mathcal{A}^{\mathbf{w}}_{N, \beta}$ is infinite by showing that it is unbounded. Fix an arbitrary $C > 0$. Since $\inf\limits_{k \geq 1} \omega_k = 0$, there exists a natural number $K \geq C + N$ such that
		\begin{align} \label{eq-24-08-1437}
			\omega_{K} \leq \beta \cdot \big( \max_{1 < s \leq N-1} \|{\bf w}\|_\infty^{s} \big)^{-1}.
		\end{align}
		By taking $\widetilde{k} := K - N \geq C,$ we obtain
		\begin{align*}
			\max_{\widetilde{k} < u \leq \widetilde{k}+N} \prod_{j = u}^{\widetilde{k}+N} \omega_j 
			= \omega_{K} \cdot \max_{K-N < u < K} \prod_{j = u}^{K-1} \omega_j \notag \leq \omega_{K} \cdot \max_{1 < s \leq N-1} \|{\bf w}\|_\infty^{s} \stackrel{\eqref{eq-24-08-1437}}{\leq} \beta,
		\end{align*}
which shows that the set $\mathcal{A}^{\mathbf{w}}_{N, \beta}$ contains $\widetilde{k}$, and hence it is indeed unbounded.

Assume now that $\inf\limits_{k \geq 1} \omega_k > 0$. Define
\begin{align} \label{eq-03-08-F}
	\beta := \max\bigg\{\|{\bf w }\|_{\infty} , \big(\inf_{k \geq 1} \omega_k\big)^{-1}\bigg\}.
\end{align} 
To show that $\mathcal{A}^{\mathbf{w}}_{N,\beta}$ is infinite for any $N \in \mathbb{N}$, note that since $k \in \mathcal{A}^{\mathbf{w}}_{N,\beta}$ implies 
$k + u \in \mathcal{A}^{\mathbf{w}}_{N - u,\beta}$ for each 
$u = 1, \dots, N - 1$, we may (and will) assume, without loss of generality, 
that $N \geq 7$.

It follows from the definition of $\beta$ that $\beta \le 1$ holds if and only if $\beta = 1$. In this case, $\omega_k = 1$ for all $k$, and the conclusion of the lemma is trivial. Thus, we may assume $\beta > 1$ from now on. We claim that the set
		\begin{align*}
			\mathcal{C}_{N,\beta} := \bigg\{k \in \mathbb N : \prod_{j = k+1}^{k+2^N} \omega_{j} \leq \beta\bigg\}
		\end{align*}
		is infinite. Suppose, for contradiction, that there exists $k_0 \in \mathbb{N}$ such that
        \begin{align} \label{eq-24-08-1525}
        k \not\in \mathcal{C}_{N,\beta} \quad \mathrm{for \, every} \, \, k \geq k_0.
        \end{align}
        Since $\beta > 1$, we can find $m \in \mathbb{N}$ such that
		\begin{align} \label{eq-03-08-C}
			\beta^{m - 1} > \big( \inf\limits_{k \geq 1} \omega_k \big)^{-k_0+1}.
		\end{align}
	Let $I \subset \mathbb{N}$ be an interval with $|I| = k_0 - 1 + m \cdot 2^N$. 
Then $[\mathfrak{p}(I)+k_0-1, \mathfrak{g}(I)]\cap\mathbb{N} \subset I$ contains exactly $m$ consecutive intervals of length $2^N$, all lying in $[k_0, \infty)$. Hence,
		\begin{align*}
			\prod_{i \in I} \omega_i = \bigg(\prod_{i = \mathfrak{p}(I)}^{\mathfrak{p}(I)+k_0-2} \omega_i\bigg) \bigg(\prod_{i = \mathfrak{p}(I)+k_0-1}^{\mathfrak{g}(I)} \omega_i\bigg)  \stackrel{\eqref{eq-24-08-1525}}{\geq} \big(\inf\limits_{i \geq 1} \omega_i\big)^{k_0-1} \cdot \beta^m \stackrel{\eqref{eq-03-08-C}}{>} \beta.
		\end{align*}
This shows that the product of the $\omega_k$ whose indices lie in any interval $I$ with $|I| = k_0 - 1 + m \cdot 2^N$ is greater than or equal to $\beta > 1$, which clearly contradicts \eqref{eq-31-07-D}. Therefore, the proof that $\mathcal{C}_{N,\beta}$ is infinite is complete.

Note that, since $\mathcal{C}_{N,\beta}$ is infinite, to conclude that $\mathcal{A}^{\mathbf{w}}_{N,\beta}$ is infinite, it suffices to show that for any $k' \in \mathcal{C}_{N,\beta}$, we have $\mathcal{A}^{\mathbf{w}}_{N,\beta} \cap [k',\, k'+2^N] \neq \emptyset$. To this end, fix some $k' \in \mathcal{C}_{N,\beta}$. This yields
\begin{align} \label{eq-03-08-D}
	\prod_{j = k'+1}^{k'+2^N} \omega_j \leq \beta  \Longrightarrow \sum_{j = k'+1}^{k'+2^N} \log_{\beta}(\omega_j) \leq \log_{\beta}(\beta)=1.
\end{align}
On the other hand, for any $j \in \mathbb N$, it holds
\begin{align} \label{eq-03-08-E}
-1 = \log_{\beta} (\beta^{-1}) \stackrel{\eqref{eq-03-08-F}}{\leq}	\log_{\beta} (\omega_j) \stackrel{\eqref{eq-03-08-F}}{\leq} \log_{\beta} (\beta) = 1.
\end{align}
From \eqref{eq-03-08-D}, \eqref{eq-03-08-E}, and Sublemma~\ref{sublemma-1}, there exists an interval $I_{N} \subset \mathbb{N}$ satisfying
\begin{align} \label{eq-2025-10-31}
	I_{N} \subset [k'+1,\, k'+2^N], \quad |I_{N}| \geq N, \quad \text{and} \quad \max_{\mathfrak{p}(I_{N}) \leq u \leq \mathfrak{g}(I_{N})} \sum_{j=u}^{\mathfrak{g}(I_{N})} \log_{\beta}(\omega_j) \leq 1,
\end{align}
where the last condition is equivalent to
\begin{align} \label{eq-2025-10-31-2154}
	\max_{\mathfrak{p}(I_{N}) \leq u \leq \mathfrak{g}(I_{N})} \prod_{j=u}^{\mathfrak{g}(I_{N})} \omega_j \leq \beta.
\end{align}
Thus, by setting $k_0 := \mathfrak{g}(I_N) - N < \mathfrak{g}(I_N)$ and using \eqref{eq-2025-10-31} and \eqref{eq-2025-10-31-2154}, we obtain
$$
k_0 \geq \mathfrak{p}(I_N) - 1, \quad 
k_0 \in [k', k' + 2^N], \quad \text{and} \quad
\max_{k_0 < u \leq k_0+N} \prod_{j=u}^{k_0+N} \omega_j \leq \beta.
$$
Hence $k_0 \in \mathcal{A}^{\mathbf{w}}_{N,\beta} \cap [k', k' + 2^N]$, which completes the proof.

\end{proof}

\subsection{Miscellaneous Notation}

 Given $I$ and $J$ intervals in $\mathbb{N}$, we denote $I < J$ (or $J > I$) if $\mathfrak{g}(I) < \mathfrak{p}(J)$. Sometimes, when $I < J$, we shall say that $I$ \emph{lies to the left} of $J$, or equivalently, that $J$  \emph{lies to the right} of $I$. The set of all intervals in $\mathbb N$ will be denoted by $\mathscr{P}_{int}$.

Let ${\bf w} = {(\omega_k)}_{k \geq 1} \subset \mathbb{R}_{> 0}$ be a  weight, that is, a bounded sequence of positive real numbers. Recall that the \textit{unilateral weighted backward shift} $B_{\mathbf{w}} : \ell_p \to \ell_p$, for $1 \leq p < \infty$, is defined by
$$
B_{\mathbf{w}}(x_1, x_2, \ldots) = (\omega_1 \, x_2, \, \omega_2 \, x_3, \ldots).
$$ Given an interval $I$ in $\mathbb N$, and a vector $x := (x_1,x_2,\ldots,x_{|I|}) \in \mathbb{K}^{|I|}$, we denote
\begin{align} \label{eq-aux-10}
	\vartheta_I^{{\bf w}}(x) &:= \sum_{s=1}^{|I|} \dfrac{x_s}{\prod_{j=s}^{\mathfrak{p}(I)+s-2} \omega_{j}} \, e_{\mathfrak{p}(I)+s-1},
\end{align}
where $\prod_{j=s}^{\mathfrak{p}(I)+s-2} \omega_{j} = 1$ if $\mathfrak{p}(I) = 1$. With this notation, observe that for the weighted backward  shift ${B_{{\bf w}} : \ell_p \to \ell_p}$, we have 
\begin{align} \label{eq-aux-15}
	\small	\begin{split}
		B_{\bf w}^m \left(\vartheta_I^{{\bf w}}(x)\right) = 
		\begin{cases}
			\sum\limits_{s=1}^{|I|} \dfrac{x_s}{\prod_{j = s}^{\mathfrak{p}(I) +s-m-2} \, \omega_{j}} \, e_{\mathfrak{p}(I)+s-m-1} & \mbox{if} \  \ m \leq \mathfrak{p}(I)-1 \\
			0 &\mbox{if} \ \ m \geq \mathfrak{g}(I) \\
		\end{cases},
	\end{split}
\end{align}
where $\prod_{j = s}^{\mathfrak{p}(I) +s-m-2} \, \omega_{j} = 1$ if $m = \mathfrak{p}(I)-1$.

The letter $\mathbb{K}$ denotes either the field $\mathbb{R}$ or the field $\mathbb{C}$. For each $k \in \mathbb N$ and each real number $p \geq 1$, we define
\begin{align*}
	\mathscr{D}_{k}^{\mathbb{K},p} := \bigg\{(\alpha_1, \ldots, \alpha_k) \in \mathbb{Q}_{\mathbb{K}}^k : \sum_{j=1}^k |\alpha_j|^p < 1, \, \alpha_k \not= 0\bigg\},
\end{align*}
where $\mathbb{Q}_{\mathbb{C}} := \mathbb{Q} + i \mathbb{Q}$ and $\mathbb{Q}_{\mathbb{R}} := \mathbb{Q}$. We will, without further notice, regard $\mathbb{K}^k$ as a subspace of $\mathbb{K}^{\mathbb{N}}$ in certain situations by identifying  $(\alpha_1, \ldots, \alpha_k)$ with $(\alpha_1, \ldots, \alpha_k, 0, 0, \ldots)$. It is evident that $\overline{\bigcup_{k \geq 1}\mathscr{D}_{k}^{\mathbb{K},p}}^{\ell_p} =  \overline{B_{\ell_p}}$, where $B_{\ell_p}$ denotes the open unit ball of $\ell_p$.

\section{Proof of \ref{thm-A-1}$\Rightarrow$\ref{thm-B-1} in Theorem~\ref{thm-1}} \label{Sec-4}

	\subsection{Construction of a family of functions depending on $\mathbf{w}$} \label{Sub-sec-4}
	From  \eqref{eq-19-08-I} and Lemma~\ref{lemma-pesos},  there exists $\beta \geq 1$ such that
\begin{align} \label{eq-06-08-B}
    \mathcal{A}_{N,\beta}^{\mathbf{w}} \ \ \text{is infinite for every } N \in \mathbb{N}. 
\end{align} 
Moreover, the first part of \eqref{eq-19-08-I} yields
\begin{align} \label{eq-24-08-1720}
    \|{\bf w}\|_\infty \geq 1.
\end{align}
Still from the first part of \eqref{eq-19-08-I}, we can define an increasing 
function $\theta : \mathbb{N} \to \mathbb{N}$ such that, for every $k \in \mathbb{N}$, 
one can choose $\theta(k) \in \mathbb{N}$ satisfying
\begin{align} \label{eq-06-08-D}
   \sum_{\ell \geq \theta(k)} 
    \frac{\|{\bf w}\|_\infty^{kp}}{(\omega_1 \omega_2 \cdots \omega_\ell)^p} < 2^{-k}.
\end{align}
Fix $\theta$ as in \eqref{eq-06-08-D} and $\tau : 2\mathbb{N} \to \mathbb{N}$ such that $\tau^{-1}(n)$ is infinite for each $n \in \mathbb{N}$. Note that, since $\theta$ is increasing, we have
\begin{align} \label{eq-2025-11-02-1634}
	 \theta(k) \geq k  \quad \mathrm{for \, each } \ \ k \in \mathbb{N}.
\end{align}

Our goal in this subsection is to construct a sequence $(\psi_k)_{k \geq 1}$ of functions 
   \begin{align} \label{eq-07-08-C}
   \psi_k : [1, j_k]\cap\mathbb{N} \to \mathscr{P}_{int}, \quad \mbox{where } j_k \in \mathbb N,
   \end{align}
   satisfying the following conditions:
	\begin{enumerate}[label=(\Alph*1)]
		\item \label{it-A1} $\dfrac{\mathfrak{p}(\psi_k(1)) - \mathfrak{g}(\psi_{k-1}(j_{k-1}))}{\mathfrak{p}(\psi_k(1))} \geq \dfrac{k-1}{k}$ for each $k\geq 2$;
        	\item \label{it-B1} For any $k \geq 3$ odd, it holds 
            \begin{align} \label{eq-21-08-1346}
            j_{k}=1, \ \ |\psi_k(1)| = \mathfrak{g}(\psi_{k-1}(j_{k-1})) \ \ \textrm{and} \ \ \max_{\substack{u \in \psi_{k}(1) \\ u \not= \mathfrak{g}(\psi_k(1))}} \prod_{\ell=u}^{\mathfrak{g}(\psi_k(1))-1} \omega_\ell \leq \beta.
            \end{align}
            \item \label{it-C1} $\mathfrak{p}(\psi_k(1)) - \mathfrak{g}(\psi_{k-1}(1)) \geq \theta(\tau(k)+\mathfrak{g}(\psi_{k-1}(1)))$ if $k$ is even;
		\item \label{it-D1}$ |\psi_k(u)| = \tau(k)$ if $1 \leq u \leq j_{k}$ and $k$ is even;
		\item \label{it-E1} $\mathfrak{p}(\psi_k(u)) - \mathfrak{p}(\psi_k(u-1)) = \theta(\tau(k))$ if $1 < u \leq {j_{k}}$ and $k$ is even;
        	\item \label{it-F1} $\dfrac{j_{k}}{\mathfrak{p}(\psi_k(1))+ (j_{k}-1)  \theta(\tau(k))} \geq \dfrac{1}{2 \, \theta(\tau(k))}$ if $k$ is even.
	\end{enumerate}
  We now explain how to construct the functions in \eqref{eq-07-08-C} by induction on $k$.  For $k = 1$, we choose $j_1 = 1$ and set $\psi_1(1) := \{1\}.$ Suppose that $\psi_1, \psi_2, \ldots, \psi_k$ have been chosen. First, suppose that $k$ is odd (hence, $k+1$ is even). Take any interval $J_0 \subset \mathbb{N}$ that satisfies 
    \begin{align} \label{eq-08-08-D}
    |J_0| = \tau(k+1).
    \end{align}
    Clearly, any rightward translation of $J_0$ also has size equal to $\tau(k+1)$. Hence, we may take a sufficiently large rightward translation of $J_0$ so that the translated interval, say $J_1$, also satisfies 
	\begin{align} \label{eq-06-08-A}
	 \dfrac{\mathfrak{p}(J_1) - \mathfrak{g}(\psi_{k}(j_{k}))}{\mathfrak{p}(J_1)} \geq \dfrac{k}{k+1}.
	\end{align}
Note that this is possible since
$$\dfrac{n - \mathfrak{g}(\psi_{k}(j_k))}{n}  \longrightarrow   1 \quad \mathrm{as} \, \, n \to \infty.$$
Clearly, any further rightward translation of $J_1$ continues to satisfy \eqref{eq-06-08-A}. Now, let us take an interval $J_2$ satisfying
\begin{align}  \label{eq-08-08-E}
\mathfrak{p}(J_2) - \mathfrak{g}(\psi_{k}(1)) \geq \theta(\tau(k+1)+\mathfrak{g}(\psi_k(1)))
\end{align}
by further translating $J_1$ to the right. It is worth noting once again that any rightward translation of $J_2$, besides satisfying \eqref{eq-08-08-D} and \eqref{eq-06-08-A}, also satisfies \eqref{eq-08-08-E}. Thus, we set $\psi_{k+1}(1) := J_2$, thereby obtaining an interval $\psi_{k+1}(1)$ that satisfies \ref{it-A1}, \ref{it-D1}, and \ref{it-C1}.

Next, we choose $j_{k+1}$ sufficiently large so that condition \ref{it-F1} holds for the interval $\psi_{k+1}(1)$. This is possible because
\begin{align*}
		\dfrac{j}{\mathfrak{p}(\psi_{k+1}(1)) + (j-1)\, \theta(\tau(k+1))} \longrightarrow \dfrac{1}{\theta(\tau(k+1))} \quad \text{as } j \to \infty.
	\end{align*} 
	To complete the first case, where $k+1$ is even, it suffices to define
	\begin{align} \label{eq-05-08-B}
		\psi_{k+1}(u+1) := u \cdot \theta(\tau(k+1)) + \psi_{k+1}(1)
	\end{align}
for each $u = 1, 2, \ldots, j_{k+1} - 1$, which ensures that conditions \ref{it-D1} and \ref{it-E1} are satisfied, as these intervals are consecutive rightward translations of $\psi_{k+1}(1)$ by integer multiples of $\theta(\tau(k+1))$.

Now, suppose that $k$ is even (i.e., $k+1$ is odd). In this case, we take $j_{k+1} = 1$. Since the set $\mathcal{A}^{\mathbf{w}}_{\mathfrak{g}(\psi_{k}(j_{k}))-1,\beta}$ is infinite (cf. \eqref{eq-06-08-B}), we can choose an integer $n' \in \mathcal{A}^{\mathbf{w}}_{\mathfrak{g}(\psi_{k}(j_{k}))-1,\beta}$ sufficiently large so that
\begin{align*} 
	 \dfrac{n'+1 - \mathfrak{g}(\psi_{k}(j_{k}))}{n'+1} \geq \dfrac{k}{k+1}.
\end{align*}
Thus, the construction is completed by defining
$$\psi_{k+1}(1) := [n'+1, n'+\mathfrak{g}(\psi_{k}(j_{k}))].$$
Indeed, it is clear that this interval satisfies condition \ref{it-A1}. Moreover, the size of $\psi_{k+1}(1)$ equals $\mathfrak{g}(\psi_k(j_k))$, and, since $n' \in \mathcal{A}^{\mathbf{w}}_{\mathfrak{g}(\psi_{k}(j_{k}))-1,\beta}$, it follows that 
\begin{align*}
\max_{n' < u \leq n' + \mathfrak{g}(\psi_k(j_k))-1} \prod_{\ell=u}^{n' + \mathfrak{g}(\psi_k(j_k))-1} \omega_\ell \leq \beta,
\end{align*}
which is exactly the second condition in \eqref{eq-21-08-1346}. That is, $\psi_{k+1}(1)$ satisfies \ref{it-B1}.

\subsection{Definition of a $(1+\varepsilon)$-isometric isomorphism $T:\ell_p\to\ell_p$} \label{Subsec-3.2}
Fix some $\varepsilon > 0$. We will define a sequence ${(x_q)}_{q \geq 1} \subset \ell_p$ such that
	\begin{align}
		\begin{split} \label{def-of-T}
			T : \ell_p &\longrightarrow \ell_p\\
			\lambda = {(\lambda_q)}_{q \geq 1} &\longmapsto T(\lambda) := \sum_{q \geq 1} \lambda_q \, x_q
		\end{split}
	\end{align}
is a well-defined linear $(1+\varepsilon)$-isometric isomorphism onto its range.

Since $\tau^{-1}(n)$ is infinite for each $n \in \mathbb{N}$, we may write
\begin{align} \label{eq-25-07-B}
\tau^{-1}(n) =: \{\varphi(n,1) < \varphi(n,2) < \cdots < \varphi(n,i) < \cdots \} \subset 2\mathbb{N}.
\end{align}
To simplify notation, we henceforth set
\begin{align} \label{eq-24-08-1946}
\varpi_{n,i}^u := \psi_{\varphi(n,i)}(u).
\end{align}
Let $\{A_q\}_{q \geq 1}$ be a disjoint partition of $\mathbb{N}$ such that each $A_q$ is infinite. For each $(q,n) \in \mathbb{N}^2$, let \(\Phi^{p,q}_n : A_q \to \mathscr{D}_n^{\mathbb{K},p}\) be an arbitrarily fixed bijection. This allows us to define \(x_q = {(x_q^\ell)}_{\ell \geq 1} \in \mathbb{K}^{\mathbb{N}}\), \(q \in \mathbb{N}\), such that
	\begin{align} \label{eq-sec2-1}
    \begin{split}
		x_q := e_{\mathfrak{g}(\psi_{2q-1}(1))} + \varepsilon \, \widetilde{x}_q \,  \mbox{ where } \, \widetilde{x}_q := \sum_{n \geq 1} \sum_{i \in A_q} \left[\sum_{u =1 }^{j_{\varphi(n,i)}} \vartheta_{\varpi_{n,i}^u}^{{\bf w}}(\Phi_{n}^{p,q}(i))\right].
        \end{split}
	\end{align}
	First, we show that $x_q \in \mathbb{K}^{\mathbb{N}}$. By \ref{it-A1}, \ref{it-D1}, \ref{it-E1}, and \eqref{eq-2025-11-02-1634}, it follows that
    \begin{align} \label{eq-09-08-M}
   \psi_{k}(u) \cap \psi_{k'}(u') = \emptyset \ \ \mathrm{if} \ \   (k,u)\not=(k',u').
    \end{align}
   Using this, we can prove that
\begin{align} \label{eq-08-08-A}
\varpi_{n,i}^{u} \cap \varpi_{n',i'}^{u'} = \emptyset \ \ \mathrm{if} \ \  (n,i,u)\not=(n',i',u').
\end{align}
Indeed, if $n \neq n'$, then $\psi_{\varphi(n,i)}$ and $\psi_{\varphi(n',i')}$ lie in different fibers of $\tau$, hence are distinct, and \eqref{eq-08-08-A} follows from \eqref{eq-09-08-M}. If $n = n'$ and $i \neq i'$, then, since $(\varphi(m,j))_{j \geq 1}$ is increasing for each $m$, we obtain, again from \eqref{eq-09-08-M}, that \eqref{eq-08-08-A} holds. Finally, if $u \neq u'$, then, once more from \eqref{eq-09-08-M}, we obtain the validity of \eqref{eq-08-08-A}. Thus, since 
\begin{align} \label{eq-24-08-2244}
\mathrm{supp}(\vartheta_{\varpi_{n,i}^u}^{\bf w}(\Phi_n^{p,q}(i))) \subset \varpi_{n,i}^u,
\end{align}
it follows from the definition of $x_q$ (cf.~\eqref{eq-sec2-1}) and from \eqref{eq-08-08-A} that $x_q \in \mathbb{K}^{\mathbb{N}}$.

Until the end of this section, for each \(i \in A_q\), we denote
	\begin{align} \label{eq-sec2-2}
		\Phi_n^{p,q}(i) =: (z_{i,1}^{p,q}, \ldots, z_{i,n}^{p,q}) \in \mathscr{D}_n^{\mathbb{K},p} \subset B_{\ell_p^n}.
	\end{align} 
The proof that $\widetilde{x}_q \in \overline{B}_{\ell_p}$ (hence $x_q \in \ell_p$) begins with the following calculation:
	\begin{align*}
            \sum_{ \substack{i,n \geq 1  \\ i \in A_q}}& \sum_{u=1}^{j_{\varphi(n,i)}} \big\| \vartheta_{\varpi_{n,i}^{u}}^{{\bf w}}(\Phi_n^{p,q}(i))\big\|_p^p \stackrel{\eqref{eq-aux-10}+\eqref{eq-sec2-2}}{\leq} \sum_{ \substack{ i,n \geq 1  \\ i \in A_q}} \sum_{u=1}^{j_{\varphi(n,i)}} \sum_{s=1}^n \dfrac{|z_{i,s}^{p,q}|^p}{\prod_{\ell = s}^{\mathfrak{p}(\varpi_{n,i}^u)+s-2} \omega_\ell^p}
            \\
            &\stackrel{\eqref{eq-sec2-2}}{\leq}\sum_{ \substack{ i,n \geq 1  \\ i \in A_q}} \sum_{u=1}^{j_{\varphi(n,i)}} \sum_{s=1}^n \dfrac{1}{\prod_{\ell = s}^{\mathfrak{p}(\varpi_{n,i}^u)+s-2} \omega_\ell^p}
             \stackrel{\eqref{eq-24-08-1720}}{\leq} \sum_{ \substack{ n \geq 1}} \left[\sum_{i \in A_q}\sum_{u=1}^{j_{\varphi(n,i)}} \sum_{s=1}^n \dfrac{\|\mathbf{w}\|_\infty^{np}}{\prod_{\ell = 1}^{\mathfrak{p}(\varpi_{n,i}^u)+s-2} \omega_\ell^p}\right] = (\star)
\end{align*} 
For each $(q,n) \in \mathbb{N}^2$, denote by $\varpi_{n,i_n^q}^{u_n}$ the interval that lies to the left of any other interval of the form $\varpi_{n,i}^{u}$ with $i \in A_q$ and $1 \leq u \leq j_{\varphi(n,i)}$. It follows from \ref{it-E1} that $u_n$ must be equal to $1$. Since the intervals $\varpi_{n,i}^{u}$ are pairwise disjoint and $n$ is the size of these intervals, in $(\star)$ the triple summation inside the brackets sums at most one term of the form $\left( \frac{\|\mathbf{w}\|_\infty^{n}}{\omega_1 \cdots \omega_\ell} \right)^p$ corresponding to distinct $\ell$.  Thus, continuing the above calculation, we have
             \begin{align*}
             (\star) &\leq \sum_{n \geq 1}\left[\sum_{\ell \geq \mathfrak{p}(\varpi_{n,i_n^q}^{1})-1}\dfrac{\|\mathbf{w}\|_\infty^{np}}{(\omega_1 \cdots \omega_\ell)^p}\right]
             \leq \sum_{n \geq 1}\left[\sum_{\ell \geq \mathfrak{p}(\varpi_{n,i_n^q}^{1})-\mathfrak{g}(\psi_{\varphi(n,i_n^q)-1}(1))}\dfrac{\|\mathbf{w}\|_\infty^{np}}{(\omega_1 \cdots \omega_\ell)^p}\right]
             \\ 
             &\hspace{-0.35in}\stackrel{(\mathrm{by} \, \ref{it-C1}+\eqref{eq-25-07-B})}{\leq} \sum_{n \geq 1}\left[\sum_{\ell \geq \theta(n+\mathfrak{g}(\psi_{\varphi(n,i_n^q)-1}(1)))}\dfrac{\|\mathbf{w}\|_\infty^{np}}{(\omega_1 \cdots \omega_\ell)^p}\right]
             \stackrel{\eqref{eq-24-08-1720}+\eqref{eq-06-08-D}}{\leq}  \sum_{n \geq 1} 2^{-n} = 1.
	\end{align*} 
Thus, since the family of vectors of the form $\vartheta_{\varpi_{n,i}^u}^{\mathbf{w}}\big(\Phi_n^{p,q}(i)\big)$ consists of vectors in $c_{00}(\mathbb{N})$ with pairwise disjoint supports (cf. \eqref{eq-08-08-A} and \eqref{eq-24-08-2244}), we have just proved that
\begin{align} \label{eq-30-08-1736}
\sum_{\ell \geq 1} |\pi_\ell(\widetilde{x}_q)|^p = \sum_{ \substack{i,n \geq 1  \\ i \in A_q}} \sum_{u=1}^{j_{\varphi(n,i)}} \big\| \vartheta_{\varpi_{n,i}^{u}}^{{\bf w}}(\Phi_n^{p,q}(i))\big\|_p^p \leq 1,
\end{align}
which shows that $\widetilde{x}_q \in \overline{B}_{\ell_p}$.

Now, since the supports of the vectors $x_q$ are pairwise disjoint (cf. \eqref{eq-08-08-A} and \eqref{eq-24-08-2244}, and recall that the sets $A_q$ are pairwise disjoint), the well-definedness of $T$ follows from
\begin{align} \label{eq-aux-4}
	\begin{split}
		\sum_{\ell \geq 1}|\pi_\ell(T(\lambda))|^p &= \sum_{q \geq 1} \|\lambda_q \, x_q\|_p^p = \sum_{q \geq 1} |\lambda_q|^p {\|e_{\mathfrak{g}(\psi_{2q-1}(1))} +  \varepsilon \, \widetilde{x}_q\|}_p^p \\ &\leq (1+\varepsilon)^p \, \sum_{q \geq 1} |\lambda_q|^p = (1+\varepsilon)^p \, {\|\lambda\|}_p^p
	\end{split}
\end{align}
for each $\lambda = {(\lambda_q)}_{q \geq 1}$.
It is clear that $T$ is linear. Moreover, it follows from \eqref{eq-aux-4} that $T$ is continuous and $\|T\| \leq 1 + \varepsilon$. To conclude that $T$ is a  $(1+\varepsilon)$-isometric isomorphism onto its range, it suffices to note that, for each $\lambda = {(\lambda_q)}_{q \geq 1} \in \ell_p$, we have
\begin{align} \label{eq-sec2-B}
	\|T(\lambda)\|_p^p \stackrel{\eqref{eq-aux-4}}{=} \sum_{q \geq 1} \|\lambda_q \, x_q\|_p^p \geq \sum_{q \geq 1} |\lambda_q|^p \cdot |x_q^{\mathfrak{g}(\psi_{2q-1}(1))}|^p = {\|\lambda\|}_p^p.
\end{align}

\subsection{The range of $T$ lies (up to the zero vector) in $\mathrm{UFHC}(B_{\mathbf{w}})$} \label{subsection-3.3} We now prove that \(T(\ell_p) \setminus \{0\} \subset \mathrm{UFHC}(B_{\mathbf{w}})\). Fix $x_0 \in T(\ell_p) \setminus \{0\}$; for instance,
	\begin{align} \label{eq-sec2-C}
		x_0 = \sum_{q \geq q_0} \lambda_q \, x_q, \quad \text{where} \, \,  \, \lambda = (\lambda_q)_{q \geq 1} \in \ell_p \, \, \text{and} \, \, \lambda_{q_0} \not= 0.
	\end{align}
	Since $x_0$ is upper frequently hypercyclic if and only if every non-zero multiple of $x_0$ is upper frequently hypercyclic, we may suppose
	\begin{equation} \label{eq-25-07-A}
		\lambda_{q_0} = 1.
	\end{equation}
To prove that $x_0$ is an upper frequently hypercyclic vector for $B_{\mathbf{w}}$, we will make use of a version of the Bourdon–Feldman theorem for upper frequent hypercyclicity: $x_0$ is an upper frequently hypercyclic vector for $B_{\mathbf{w}}$ if there exists a non-empty open subset $O$ of $\ell_p$ such that $\overline{\mathrm{d}}(\mathcal{N}_{B_{\bf w}}(x_0, W)) > 0$ for every non-empty open set $W \subset O$. The proof of this result follows the same lines as the proof of \cite[Theorem 6.27]{BayMath-book}, replacing lower by upper density. In our proof, the open set $O$ will be the open ball $\varepsilon B_{\ell_p}$.

Let \(W \subset \varepsilon B_{\ell_p}\) be a non-empty open subset of \(\ell_p\). Then there exist \(\delta > 0\) and \(y_0 \in W\) such that \(B(y_0, \delta) \subset W\). To conclude the proof, we will show that $\overline{\mathrm{d}}(\mathcal{N}_{B_{\mathbf{w}}}(x_0, B(y_0, \delta))) > 0$. Fix some $n_0 \in \mathbb{N}$ such that
	\begin{align} \label{eq-25-07-E}
		2^{-n_0+1} < \min\bigg\{\dfrac{\delta^p}{4^p \varepsilon^p}, \dfrac{\delta^p}{\varepsilon^p \, 2^{p+1} \, \|\lambda\|_p^p}\bigg\} =: \eta_0.
	\end{align}
	Since $\varepsilon^{-1} B(y_0,\delta/4) \subset B_{\ell_p}$, we can take 
    \begin{align} \label{eq-16-08-A}
    y_1 \in \varepsilon^{-1} B(y_0, \delta/4) \cap \big(\bigcup_{k \geq 1} \mathscr{D}_k^{\mathbb{K},p}\big).
    \end{align}
    We may assume, without loss of generality, that
	\begin{align} \label{eq-25-07-F}
		\mathfrak{g}(\mathrm{supp}(y_1)) =: n^* \geq n_0.
	\end{align} 
	Indeed, if necessary, one can perturb the $n_0$-th coordinate of $y_1$ by a sufficiently small rational value, making this coordinate nonzero while ensuring that the new vector still lies in $\varepsilon^{-1} B(y_0, \delta/4)$. Thus, given such a $y_1$, and again using the idea of perturbing its $n^*$-th coordinate, we can construct a sequence ${(y_k)}_{k \geq 1}$ of distinct terms in $\ell_p$ such that
	\begin{align} \label{29-07-C}
		y_k \in \varepsilon^{-1} B(y_0, \delta/4) \cap \mathscr{D}_{n^*}^{\mathbb{K},p} \quad \mbox{for each } k \in \mathbb{N}.
	\end{align} 
Since $\Phi_{n^*}^{p,q_0} : A_{q_0} \to \mathscr{D}_{n^*}^{\mathbb{K},p}$ is a bijection, it follows that each $y_k$ lies in the range of $\Phi_{n^*}^{p,q_0}$. Thus, there exist $i_{n^*,1}^{q_0} < i_{n^*,2}^{q_0} < i_{n^*,3}^{q_0} < \cdots$ lying in  $A_{q_0}$ such that, by reordering the sequence ${(y_k)}_{k \geq 1}$ if necessary, we have
	\begin{align} \label{29-07-E}
		\Phi_{n^*}^{p,q_0}(i_{n^*,k}^{q_0}) = y_k \, \text{ for each } \, k \in \mathbb{N}.
	\end{align} 
	To simplify the notation, from now until the end of this subsection, we set $i_k := i_{n^*,k}^{q_0}$ for each $k$. Define
    	\begin{align} \label{def-of-set-S}
		\mathcal{S} := \bigg\{\mathfrak{p}(I)-1 : I \in \bigcup_{k\geq 1}\mathrm{Im}(\psi_{\varphi(n^*,i_k)})\bigg\}.
	\end{align}
	 Note that the proof that $\overline{\mathrm{d}}(\mathcal{N}_{B_{\mathbf{w}}}(x_0, B(y_0, \delta))) > 0$ will be complete if we establish the existence of some $s_0 \in \mathbb N$ such that the following two facts hold:
	\begin{enumerate}[label=(\alph*)]
		\item \label{item-30-08-2226} $B_{\bf w}^{s'} x_0 \in B(y_0,\delta)$ whenever $s' \in \mathcal{S}$ and $s' > s_0$;
		\item \label{item-30-08-2227} $\overline{\mathrm{d}}(\mathcal{S}) > 0$.
	\end{enumerate}

	Let us first prove \ref{item-30-08-2226}. Choose two natural numbers $l_1 \geq l_0$ sufficiently large so that
\begin{align} \label{eq-07-08-G}
\sum_{l \geq l_0} |\lambda_l|^p < \frac{\delta^p}{2^{p+1}\beta} \ \ \mathrm{and} \ \ B_{\mathbf{w}}^{l_1}\!\left(e_{\mathfrak{g}(\psi_{2q-1}(1))}\right) = 0 \ \ \mathrm{if} \ \ q < \ell_0.
\end{align}
(Recall that $\beta$ was fixed in \eqref{eq-06-08-B}.) Since $\bigcup_{k \geq 1} \mathrm{Im}(\psi_{\varphi(n^*, i_k)})$ consists of infinitely many pairwise disjoint intervals in $\mathbb{N}$, it follows that $\mathcal{S}$ is an unbounded subset of $\mathbb{N}$. Thus, we can fix some $$(k_0,u_0)\in \mathbb{N} \times \big([1,j_{\varphi(n^*,i_{k_0})}] \cap \mathbb{N}\big)$$ 
such that $\varpi_{n^*,i_{k_0}}^{u_0} \in \mathrm{Im}(\psi_{\varphi(n^*,i_{k_0})})$ satisfies
	\begin{align} \label{eq-30-07-B}
		 2^{-\mathfrak{p}(\varpi_{n^*,i_{k_0}}^{u_0})+1} < \eta_0  \,  \,  \, \mbox{and} \, \, \, s_0 := \mathfrak{p}(\varpi_{n^*,i_{k_0}}^{u_0})-1 \geq l_1.
	\end{align}
	Fix now some 
    \begin{align} \label{eq-07-08-I}
    s' := \mathfrak{p}(\varpi_{n^*,i_{k'}}^{u'}) - 1 \in \mathcal{S} \quad \mathrm{with} \, \, \, s' > s_0.
    \end{align}
    Observe that 
    {\fontsize{9.96pt}{9pt}\selectfont 
\begin{align} \label{eq-10-08-D}
\begin{split} 
    &\big\|B_{\bf w}^{s'}x_0 - y_0\big\|_p \stackrel{\eqref{eq-sec2-1}+\eqref{eq-sec2-C}+\eqref{eq-25-07-A}}{\leq} \big\| \varepsilon B_{\bf w}^{s'} \widetilde{x}_{q_0} - y_0\big\|_p + \bigg\|B_{\bf w}^{s'}\bigg(\lambda_{q_0} \, e_{\mathfrak{g}(\psi_{2q_0-1}(1))} + \sum_{q > q_0} \lambda_q \, x_q\bigg)\bigg\|_p 
        \\ &\stackrel{\eqref{eq-sec2-1}}{=} \underbrace{\big\| \varepsilon B_{\bf w}^{s'} \widetilde{x}_{q_0} - y_0\big\|_p}_{{\hypertarget{Iterm1}{\textbf{(I)}}}} + \bigg[\underbrace{\sum_{q \geq q_0} |\lambda_q|^p \|B_{\bf w}^{s'}(e_{\mathfrak{g}(\psi_{2q-1}(1))})\|_p^p}_{\hypertarget{Iterm2}{\textbf{(II)}}} + \underbrace{\varepsilon^p \sum_{q > q_0} |\lambda_q|^p \|B_{\bf w}^{s'}(\widetilde{x}_q)\|_p^p}_{\hypertarget{Iterm3}{\textbf{(III)}}}\bigg]^{1/p}.
        \end{split}
    \end{align}} 
  \!\!\!Now, the proof of \ref{item-30-08-2226} will follow from the following claim:
\begin{quote}
\underline{$\mathbf{Claim~\hypertarget{Iterm}{(\heartsuit)}}$}: $\hyperlink{Iterm1}{\textbf{(I)}} < \dfrac{\delta}{2}$, $\hyperlink{Iterm2}{\textbf{(II)}} < \dfrac{\delta^p}{2^{p+1}}$ and $\hyperlink{Iterm3}{\textbf{(III)}} < \dfrac{\delta^p}{2^{p+1}}$.
\end{quote}
Indeed, using Claim~$\hyperlink{Iterm}{(\heartsuit)}$ together with \eqref{eq-10-08-D}, we obtain
\begin{align} \label{eq-11-08-C}
\big\|B_{\mathbf{w}}^{s'} x_0 - y_0\big\|_p < \delta,
\end{align}
which completes the proof of \ref{item-30-08-2226}.

Before proving Claim~$\hyperlink{Iterm}{(\heartsuit)}$, let us prove \ref{item-30-08-2227}. Note first that, for any \( k \in \mathbb{N} \), we have
	\begin{align*}
		\dfrac{\big|\mathcal{S} \cap \big[0,\mathfrak{p}\big(\varpi_{n^*,i_k}^{j_{\varphi(n^*,i_k)}}\big)\big[\big|}{\mathfrak{p}\big(\varpi_{n^*,i_k}^{j_{\varphi(n^*,i_k)}}\big)+1} &\geq \dfrac{\big|\{\mathfrak{p}(\varpi_{n^*,i_k}^{u})-1 : 1 \leq u \leq j_{\varphi(n^*,i_{k})}\}\big|}{\mathfrak{p}\big(\varpi_{n^*,i_k}^{j_{\varphi(n^*,i_k)}}\big)+1} 
        \\ 
        &\hspace{-0.37in}\stackrel{(\mathrm{by} \, \ref{it-E1}+\eqref{eq-25-07-B})}{=} \dfrac{j_{\varphi(n^*,i_{k})}}{\mathfrak{p}(\varpi_{n^*,i_k}^1)+(j_{\varphi(n^*,i_{k})}-1) \, \theta(n^*)} 
        \\ 
        &\hspace{-0.37in}\stackrel{(\mathrm{by} \, \ref{it-F1}+\eqref{eq-25-07-B})}{\geq} \dfrac{1}{2 \, \theta(n^*)}.
	\end{align*}
	From this, it follows that
	\begin{align*}
		\overline{\mathrm{d}}(\mathcal{S}) \geq \limsup_{k \to \infty}\bigg(\dfrac{\big|\mathcal{S} \cap \big[0,\mathfrak{p}\big(\varpi_{n^*,i_k}^{j_{\varphi(n^*,i_k)}}\big)\big[\big|}{\mathfrak{p}\big(\varpi_{n^*,i_k}^{j_{\varphi(n^*,i_k)}}\big)+1}\bigg) \geq \dfrac{1}{2 \, \theta(n^*)} > 0,
	\end{align*}
	which completes the proof of \ref{item-30-08-2227}.

\vspace{0.2in} 

\underline{\textbf{Proof of Claim$~\hyperlink{Iterm}{(\heartsuit)}$}}
\\

$\bullet$ \underline{\textit{Estimate of \hyperlink{Iterm1}{\textbf{(I)}}}}: To make the upcoming calculations easier to follow, it is useful to keep in mind that, for even $k$, the intervals determined by the range of the functions in \eqref{eq-07-08-C} exhibit the following behavior:
    \begin{align} \label{eq-07-08-D}
    \begin{split}
    &\psi_2(1)  < \cdots < \psi_2(j_2) < \psi_4(1)  < \cdots < \psi_4(j_4) < \cdots < \varpi_{n^*,i_{k'}}^1 
    \\ & < \varpi_{n^*,i_{k'}}^2  < \cdots < \varpi_{n^*,i_{k'}}^{u'} < \cdots < \varpi_{n^*,i_{k'}}^{j_{\varphi(n^*,i_{k'})}} < \psi_{\varphi(n^*,i_{k'})+2}(1) < \cdots
    \end{split}
    \end{align}
(Recall that $n^*$ and $s' = \mathfrak{p}(\varpi_{n^*,i_{k'}}^{u'}) - 1$ were fixed in \eqref{eq-25-07-F} and \eqref{eq-07-08-I}, respectively.) From \eqref{eq-07-08-D}, together with \eqref{eq-aux-15} and \eqref{eq-07-08-I}, we obtain
\begin{align} \label{eq-07-08-E}
	\small	\begin{split}
		\big\|B_{{\bf w}}^{s'}\vartheta_{\varpi_{n,i}^u}^{{\bf w}}\big(\Phi_n^{p,q}(i)\big)\big\|_p^p = 
		\begin{cases}
			\displaystyle\sum_{s=1}^{n} \dfrac{|z_{i,s}^{p,q}|^p}{\prod_{\ell=s}^{\mathfrak{p}(\varpi_{n,i}^u)+s-s'-2} \omega_\ell^p} & \mbox{if} \  \ \varpi_{n,i}^u \geq \varpi_{n^*,i_{k'}}^{u'},  \\
			0 &\mbox{if} \ \ \varpi_{n,i}^u < \varpi_{n^*,i_{k'}}^{u'}. \\
		\end{cases}
	\end{split}
\end{align} 
Moreover, it follows from \eqref{eq-30-07-B} and \eqref{eq-07-08-I} that
\begin{align} \label{eq-08-08-L}
2^{-\mathfrak{p}(\varpi_{n^*,i_{k'}}^{u'})+1} 
< \eta_0.
\end{align}

Setting $I' := \varpi_{n^*,i_{k'}}^{u'}$ and $I'' := \varpi_{n^*,i_{k'}}^{j_{\varphi(n^*,i_{k'})}}$ in the following calculation, it follows that, for any $q \in \mathbb{N}$, we have
	\begin{align*} 
   \small \begin{split}
			&\sum_{\substack{n \geq 1 \\ i \in A_q}}\left[\sum_{\substack{u=1 \\ \varpi_{n,i}^u \not= I' }}^{j_{\varphi(n,i)}}\big\|B_{\bf w}^{s'} \vartheta_{\varpi_{n,i}^u}^{\bf w}\big(\Phi_n^{p,q}(i)\big)\big\|_p^p\right]
            \stackrel{\eqref{eq-24-08-1720}+\eqref{eq-sec2-2}+\eqref{eq-07-08-E}}{\leq}   \sum_{\substack{n \geq 1 \\ i \in A_q}}\left[\sum_{\substack{u=1 \\ \varpi_{n,i}^{u} > I' }}^{j_{\varphi(n,i)}}\sum_{s=1}^n\dfrac{{\|{\bf w}\|}_\infty^{np}}{\prod_{\ell=1}^{\mathfrak{p}(\varpi_{n,i}^u)+s-s'-2} \omega_\ell^p}\right] 
            \\ 
            &\hspace{0.15in}= 
            \sum_{\substack{u=u'+1}}^{j_{\varphi(n^*,{i_{k'}})}}\sum_{s=1}^{n^*} \dfrac{\|{\bf w}\|_\infty^{n^*p}}{\prod_{\ell=1}^{\mathfrak{p}(\varpi_{n^*,i_{k'}}^u)+s-s'-2} \omega_\ell^p} + \sum_{n \geq 1 } \left[\sum_{i \in A_q} \sum_{\substack{u=1 \\ \varpi_{n,i}^u >  I''}}^{j_{\varphi(n,i)}}\sum_{s=1}^{n} \dfrac{{\|{\bf w}\|}_\infty^{np}}{\prod_{\ell=1}^{\mathfrak{p}(\varpi_{n,i}^u)+s-s'-2} \omega_\ell^p}\right] = \hypertarget{2-star}{(\star\star)}
            \end{split}
            \end{align*}
            One thing worth noting is that the first double summation in $\hyperlink{2-star}{(\star\star)}$ does not appear if $q \neq q_0$, since $i_{k'} \in A_{q_0}$. To continue the estimate of $\hyperlink{2-star}{(\star\star)}$, for each $(n,q) \in \mathbb{N}^2$, we choose $\widetilde{i}^n_q \in A_q$ such that, among all intervals of the form $\varpi_{n,i}^1$ with $i \in A_q$, the interval $\varpi_{n,\widetilde{i}^n_q}^1$ is the one lying  to the right of $I''$ and to the left of every other interval situated to the right of $I''$. Moreover, we note that the family of intervals of the form
$$
[\mathfrak{p}(\varpi_{n,i}^u)-s'-1, \mathfrak{p}(\varpi_{n,i}^u) + n - s' - 2], \quad \, n \in \mathbb{N}, \, i \in A_q, \ 1 \leq u \leq j_{\varphi(n,i)},
$$
is pairwise disjoint. This holds because each such interval is a  translation by $s'-1$ units to the left of an interval of the form $\varpi_{n,i}^u,$ and these original intervals are already known to be pairwise disjoint. From this, it follows that each term of the form 
$\frac{\|\mathbf{w}\|_\infty^{np}}{(\omega_1 \cdots \omega_\ell)^p}$, 
for varying $\ell$, appears at most once in the first double summation of $\hyperlink{2-star}{(\star\star)}$ and in the triple summation within the brackets of the same expression. Setting $$J^n_q := \psi_{\varphi(n,\widetilde{i}_q^n)-1}(1), \ \ \mathrm{for \, \, any} \, \, n,q \in \mathbb{N},$$
we have 
\begin{align} \label{eq-27-08-0139}
J^n_q \geq I'' \geq I'.
\end{align}
Hence, we can complete the estimate of $\hyperlink{2-star}{(\star\star)}$ as follows:
\begin{align*}
           \small \begin{split}
            \hyperlink{2-star}{(\star\star)} &\stackrel{\eqref{eq-24-08-1720}+\eqref{eq-27-08-0139}}{\leq} \sum_{\ell \geq \mathfrak{p}(\varpi_{n^*,i_{k'}}^{u'+1})-\mathfrak{p}(I')}\dfrac{{\|{\bf w}\|}_\infty^{n^*p}}{(\omega_1 \, \omega_2 \cdots \omega_{\ell})^p} + \sum_{n \geq 1} \left[\sum_{\substack{\ell \geq  \mathfrak{p}(\varpi_{n,\widetilde{i}_q^n}^1)- \mathfrak{g}(J^n_q)}}\dfrac{{\|{\bf w}\|}_\infty^{(n+\mathfrak{g}(J^n_q)) \cdot p}}{{(\omega_1 \, \omega_2 \cdots \omega_{\ell})}^p}\right]
            \\ 
            &\hspace{-0.27in}\stackrel{(\mathrm{by} \, \ref{it-C1}\&\ref{it-E1}+\eqref{eq-25-07-B})}{\leq} \sum_{\ell \geq \theta(n^*)} \dfrac{\|\mathbf{w}\|_\infty^{n^*p}}{(\omega_1\omega_2\cdots\omega_\ell)^p} + \sum_{n \geq 1} \left[\sum_{\ell \geq \theta(n+\mathfrak{g}(J^n_q))} \dfrac{\|\mathbf{w}\|_\infty^{(n+\mathfrak{g}(J^n_q)) \cdot p}}{{(\omega_1\omega_2 \cdots \omega_\ell)}^p}\right]
            \\
            &\hspace{0.18in}\stackrel{\eqref{eq-06-08-D}}{\leq} 2^{-n^*} + \sum_{n \geq 1} 2^{-(n+\mathfrak{g}(J^n_q))} 
            \stackrel{\eqref{eq-27-08-0139}}{\leq} 2^{-n^*} + \sum_{\ell \geq \mathfrak{p}(I')+1} 2^{-\ell}
            \\ 
            &\hspace{0.23in}= 2^{-n^*} + 2^{-\mathfrak{p}(I')} \stackrel{\eqref{eq-25-07-E}+\eqref{eq-25-07-F}+\eqref{eq-08-08-L}}{<}  \dfrac{\eta_0}{2}+\dfrac{\eta_0}{2}= \eta_0.
            \end{split}
	\end{align*}
To summarize, what we have just proven for any $q$ is the following:
    \begin{align} \label{eq-07-08-F}
    \sum_{\substack{n \geq 1 \\ i \in A_q}}\left[\sum_{\substack{u=1 \\ \varpi_{n,i}^u \not= I' }}^{j_{\varphi(n,i)}}\big\|B_{\bf w}^{s'} \vartheta_{\varpi_{n,i}^u}^{\bf w}\big(\Phi_n^{p,q}(i)\big)\big\|_p^p\right] < \eta_0.
    \end{align}
Thus,
	\begin{align} \label{eq-30-07-H}
    \small\begin{split}
			\big\| \varepsilon B_{\bf w}^{s'} \widetilde{x}_{q_0} - y_0\big\|_p &\stackrel{\eqref{eq-sec2-1}}{\leq} \big\|\varepsilon B_{\bf w}^{s'}\big(\vartheta_{I'}^{\bf w}(\Phi_{n^*}^{p,q_0}(i_{k'}))\big) - y_0\big\|_p  + \varepsilon \left[ \sum_{\substack{n \geq 1 \\ i \in A_q}} \sum_{\substack{u=1 \\ \varpi_{n,i}^u \not= I' }}^{j_{\varphi(n,i)}}\big\|B_{\bf w}^{s'} \vartheta_{\varpi_{n,i}^u}^{\bf w}\big(\Phi_n^{p,q_0}(i)\big)\big\|_p^p\right]^{\nicefrac{1}{p}} \\ &\hspace{-0.35in}\stackrel{\eqref{29-07-C}+\eqref{29-07-E}+\eqref{eq-07-08-F}}{<} \dfrac{\delta}{4} + \varepsilon \dfrac{\delta}{4 \, \varepsilon} = \dfrac{\delta}{2}.
            \end{split}
	\end{align} 

$\bullet$ \underline{\textit{Estimate of \hyperlink{Iterm2}{\textbf{(II)}}}}: The estimate follows from the calculation below:
{\small
\begin{align*}
&\sum_{q \geq q_0} |\lambda_q|^p\|B_{{\bf w}}^{s'}(e_{\mathfrak{g}(\psi_{2q-1}(1))})\|_p^p \stackrel{\eqref{eq-07-08-G}+\eqref{eq-30-07-B}+\eqref{eq-07-08-I}}{=} \sum_{q \geq l_0} |\lambda_q|^p\|B_{{\bf w}}^{s'}(e_{\mathfrak{g}(\psi_{2q-1}(1))})\|_p^p 
\\ 
&= \sum_{\substack{q \geq l_0 \\ \mathfrak{g}(\psi_{2q-1}(1)) > s'}}|\lambda_q|^p \prod_{\ell=\mathfrak{g}(\psi_{2q-1}(1))-s'}^{\mathfrak{g}(\psi_{2q-1}(1))-1}\omega_{\ell} 
 \stackrel{\eqref{eq-07-08-I}}{=} \sum_{\substack{q \geq l_0 \\ \mathfrak{g}(\psi_{2q-1}(1)) > s'}}|\lambda_q|^p \prod_{\ell=\mathfrak{g}(\psi_{2q-1}(1))-\mathfrak{p}(\varpi_{n^*,i_{k'}}^{u'})+1}^{\mathfrak{g}(\psi_{2q-1}(1))-1}\omega_{\ell}
\\ &\leq \sum_{\substack{q \geq l_0 \\ \mathfrak{g}(\psi_{2q-1}(1)) > s'}} |\lambda_q|^p \max_{\mathfrak{g}(\psi_{2q-1}(1)) - \mathfrak{p}(\psi_{2q-2}(j_{2q-2})) < s \leq \mathfrak{g}(\psi_{2q-1}(1))-1} \prod_{\ell=s}^{\mathfrak{g}(\psi_{2q-1}(1))-1} \omega_\ell
\\ 
&\hspace{-0.17in}\stackrel{(\mathrm{by} \, \ref{it-B1})}{\leq} \beta \cdot \sum_{\substack{q \geq l_0 \\ \mathfrak{g}(\psi_{2q-1}(1)) > s'}} |\lambda_q|^p \stackrel{\eqref{eq-07-08-G}}{<} \dfrac{\delta^p}{2^{p+1}}.
\end{align*}}

$\bullet$ \underline{\textit{Estimate of \hyperlink{Iterm3}{\textbf{(III)}}:}} Using \eqref{eq-07-08-F} once more, we obtain that, for any $q \neq q_0$,
\begin{align*}
\|B_{{\bf w}}^{s'}(\widetilde{x}_q)\|_p^p &\stackrel{\eqref{eq-sec2-1}}{=} \sum_{\substack{n \geq 1 \\ i \in A_q}}\left[\sum_{u=1}^{j_{\varphi(n,i)}}\big\|B_{\bf w}^{s'} \vartheta_{\varpi_{n,i}^u}^{\mathbf{w}}\big(\Phi_n^{p,q}(i)\big)\big\|_p^p\right]
\stackrel{(q \not= q_0)}{=} \sum_{\substack{n \geq 1 \\ i \in A_q}}\left[\sum_{\substack{u=1 \\ \varpi_{n,i}^u \not= I' }}^{j_{\varphi(n,i)}}\big\|B_{\bf w}^{s'} \vartheta_{\varpi_{n,i}^u}^{\bf w}\big(\Phi_n^{p,q}(i)\big)\big\|_p^p\right] \\ &\hspace{-0.2in}\stackrel{\eqref{eq-07-08-F}+\eqref{eq-25-07-E}}{<} \dfrac{\delta^p}{\varepsilon^p \, 2^{p+1} \, \|\lambda\|_p^p}.
\end{align*}
Thus,
\begin{align*}
\varepsilon^p \sum_{q > q_0} |\lambda_q|^p \|B_{\bf w}^{s'}(\widetilde{x}_q)\|_p^p < \dfrac{\delta^p}{2^{p+1}},
\end{align*}
as desired.

\subsection{The range of $T$ does not intersect $\mathrm{FHC}(B_{{\bf w}})$} \label{Subsection-3.4} Now,  we will show that $x_0$, given in \eqref{eq-sec2-C}, is not frequently hypercyclic for $B_{\bf w}$. Define
$$\mathcal{M} := \bigcup_{k \geq 1} \bigcup_{\ell=1}^{j_k}\psi_k(\ell).$$ 
Note that
$$\pi_1(B^n_{\bf w}x_0) = 0 \ \ \mbox{ if } \ \ n \not\in \mathcal{M} - \{1\} :=\{m - 1 : m \in \mathcal{M}\},$$
because $\mathrm{supp}(x_0) \subset \mathcal{M}$.  From this and from Lemma \ref{lem-2}, the vector $x_0$ will not be frequently hypercyclic for $B_{\bf w}$ if $\overline{d}(\mathbb{N} \setminus (\mathcal{M} - \{1\})) = 1$, which is clearly equivalent to showing that $\overline{d}(\mathbb{N} \setminus \mathcal{M}) = 1$. To prove the equivalent condition, note that for any natural number $k \geq 2$ we have
\begin{align} \label{eq-09-08-A}
	\dfrac{\big|(\mathbb{N} \setminus \mathcal{M}) \cap [0,\mathfrak{p}(\psi_k(1))[\big|}{\mathfrak{p}(\psi_k(1))} \geq \dfrac{\mathfrak{p}(\psi_k(1))-\mathfrak{g}(\psi_{k-1}(n_{k-1}))}{\mathfrak{p}(\psi_k(1))} \stackrel{(\mathrm{by} \, \ref{it-A1})}{\geq} \dfrac{k-1}{k} \xrightarrow{k \to \infty} 1. 
\end{align}
This shows that $\overline{d}(\mathbb{N} \setminus \mathcal{M}) = 1$, and hence the proof is complete.

\section{Proof of \ref{thm-A-1}$\Rightarrow$\ref{thm-C-1} in Theorem~\ref{thm-1}} \label{Sc-3}

Using \cite[Theorem~4]{BayRuz2015}, the first condition in \eqref{eq-19-08-I} guarantees that $B_{\mathbf{w}}$ is an upper frequently hypercyclic operator. The proof that $B_{\mathbf{w}}$ contains a hypercyclic subspace free of upper frequently hypercyclic vectors follows arguments similar to those in Section \ref{Sec-4}. Thus, in the following paragraphs, we will present only the differences and similarities, without repeating arguments already given in Section~\ref{Sec-4}.

Let us begin with the construction of a sequence of functions $(\psi_k)_{k \geq 1}$ to replace the sequence in \eqref{eq-07-08-C}. Note that we retain the notation $(\psi_k)_{k \geq 1}$, although, as we shall see, the new sequence will differ from that in Subsection~\ref{Sub-sec-4}.  Since the weight ${\mathbf{w}}$ satisfies the conditions in \eqref{eq-19-08-I}, we also have the validity of \eqref{eq-06-08-B}, \eqref{eq-24-08-1720}, \eqref{eq-06-08-D} and \eqref{eq-2025-11-02-1634}. Hence, we can construct the sequence $(\psi_k)_{k \geq 1}$ in the same manner as in~\eqref{eq-07-08-C}, but this time without requiring it to satisfy conditions~\ref{it-E1} and~\ref{it-F1}. Furthermore, we assume that it satisfies a modified version of condition~\ref{it-A1}, now imposing a more restrictive requirement. Accordingly, we may (and shall) take all $j_k$ equal to~1. To do this, it suffices to follow the same steps given in the construction of Subsection~\ref{Sub-sec-4}, but omit the part where $j_k$ was chosen sufficiently large to satisfy~\ref{it-F1}, and also omit the part where the interval $\psi_k(1)$ was translated to obtain the intervals $\psi_k(u)$, $1 < u \leq j_k$, satisfying condition~\ref{it-E1}. By doing this, we guarantee the existence of a sequence of functions $\psi_k : \{1\}\to \mathscr{P}_{int}$ (which now becomes a function $\psi : \mathbb{N} \to \mathscr{P}_{int}$, defined by $\psi(k) := \psi_k(1)$; however, we will continue using the notation from Section \ref{Sec-4} for further reference) satisfying the following conditions:
\begin{enumerate}[label=(\Alph*2)]
		\item \label{A2} $\dfrac{\mathfrak{p}(\psi_k(1)) - \mathfrak{g}(\psi_{k-1}(1))}{\mathfrak{p}(\psi_k(1))+ {\max\{\mathfrak{g}(\psi_{k-1}(1)), \tau(k)\}}} \geq \dfrac{k-1}{k}$ for each $k\geq 2$;
        	\item \label{B2} For any $k \geq 3$ odd, it holds 
            \begin{align*}
            |\psi_k(1)| = \mathfrak{g}(\psi_{k-1}(1)) \ \ \textrm{and} \ \ \max_{\substack{u \in \psi_{k}(1) \\ u \not= \mathfrak{g}(\psi_k(1))}} \prod_{\ell=u}^{\mathfrak{g}(\psi_k(1))-1} \omega_\ell \leq \beta.
            \end{align*} 
            \item \label{C2} $\mathfrak{p}(\psi_k(1)) - \mathfrak{g}(\psi_{k-1}(1)) \geq \theta(\tau(k)+\mathfrak{g}(\psi_{k-1}(1)))$ if $k$ is even;
		\item \label{D2} $|\psi_k(1)| = \tau(k)$ if $k$ is even. 
	\end{enumerate}
It is worth noting that, when we change the construction in this way, taking all $j_k = 1$ when $k$ is even, we are, in broad terms, avoiding the situation in which the density of the set $[\mathfrak{p}(\psi_k(1)), \mathfrak{p}(\psi_{k}(j_k))]$ is ``large'' in $[0, \mathfrak{p}(\psi_{k}(j_k))]$ for $k$ even. This condition was necessary in Section~\ref{Sec-4} to guarantee that the vector $x_0$ (chosen in~\eqref{eq-sec2-C}) is upper frequently hypercyclic for $B_{\mathbf{w}}$.

Let us now deal with the definition of the $(1+\varepsilon)$-isometric isomorphism, which we will continue to call $T$. Here, the operator $T$ is the same as in~\eqref{def-of-T}, with the vectors $x_q$ given in~\eqref{eq-sec2-1}. All the steps and arguments of the proof provided in Subsection~\ref{Subsec-3.2} (well-definedness of $x_q$ and $T$, and the fact that $T$ is a linear $(1+\varepsilon)$-isometric isomorphism) work in exactly the same way, without changes, except that \ref{C2} is used instead of \ref{it-C1} to prove that $\widetilde{x}_q \in \overline{B}_{\ell_p}$. We stress only that, in this new case, some simplifications will occur, all due to the fact that, in the definition of the vectors $x_q$ (cf.~\eqref{eq-sec2-1}), the summation over $u$ from $1$ to $j_{\varphi(n,i)}$ will now contain only one term.

To prove that the range of $T$ lies (up to the zero vector) in $\mathrm{HC}(B_{\mathbf{w}})$, we begin by taking a vector $x_0 \in T(\ell_p) \setminus \{0\}$ as in \eqref{eq-sec2-C}. Similarly to what was done in Subsection~3.3, we may assume, without loss of generality, that \eqref{eq-25-07-A} holds. Our goal is to show that $x_0$ is hypercyclic for $B_{\mathbf{w}}$. According to the Bourdon--Feldman theorem (see \cite[Theorem~3.13]{BayMath-book} or \cite[Theorem~6.5]{Gro-ErdMang2011-book}), it suffices to prove that $\mathrm{Orb}(x_0, B_{\mathbf{w}})$ is dense in $\varepsilon B_{\ell_p}$. Let $W$ be a non-empty open subset of $\varepsilon B_{\ell_p}$. Then there exist $y_0 \in W$ and $\delta > 0$ such that $B(y_0, \delta) \subset W$. To conclude this part of the proof, our goal is to show that
$\mathcal{N}(x_0, B(y_0, \delta)) \neq \emptyset.$ To do so, as in Subsection~\ref{subsection-3.3}, choose $n_0$, $n^*$, and $(y_k)_{k \geq 1}$ satisfying \eqref{eq-25-07-E}, \eqref{eq-25-07-F}, and \eqref{29-07-C}, respectively. Moreover, we can choose a sequence $(i_k)_{k \geq 1} \subset A_{q_0}$, depending on $n^*$ and $q_0$, satisfying \eqref{29-07-E}. Define also the set $\mathcal{S}$ as in \eqref{def-of-set-S}. With such choices in hand, we can obtain, in the same way as in Subsection~\ref{subsection-3.3}, the estimate \eqref{eq-10-08-D} and the validity of Claim~\hyperlink{Iterm}{($\heartsuit$)}, which together provide \eqref{eq-11-08-C} for any sufficiently large $s' \in \mathcal{S}$, showing that $\mathcal{N}(x_0, B(y_0, \delta)) \neq \emptyset.$

We now pause for an important analysis: \textit{Why does the argument work in the last paragraph without using conditions~\ref{it-E1} and~\ref{it-F1}?}  Firstly, condition~\ref{it-E1} was used in Subsection~\ref{subsection-3.3} to show the estimate of \eqref{eq-07-08-F}, but it was necessary only because a summation of the form  
$$
\sum_{u=u'+1}^{j_{\varphi(n^*,i_{k'})}} \sum_{s=1}^{n^*} \frac{\|{\bf w}\|_\infty^{n^* p}}{\prod_{\ell=1}^{\mathfrak{p}(\varpi_{n^*,i_{k'}}^{u}) + s - s' - 2} \omega_\ell^p}
$$
appears there, which does not appear in this new case since we have chosen $j_k = 1$ for every $k$. Secondly, conditions~\ref{it-E1} and \ref{it-F1} were used in Subsection~\ref{subsection-3.3} to show that $\overline{\mathrm{d}}(\mathcal{S}) > 0$, which we do not need now since we are not concerned about the upper density of $\mathcal{N}(x_0, B(y_0, \delta))$ being positive.

Finally, let us finish this section by proving that $x_0 \not\in \mathrm{UFHC}(B_{\mathbf{w}})$. To this end, we now present a complete proof using Lemma~\ref{lem-1-subs-upper}. Define
\begin{align} \label{eq-17-11-25-2100}
\widetilde{\mathcal{M}} := \bigcup_{k \geq 1} \psi_k(1),
\end{align}
and note that
\begin{align*}
\pi_1(B_{\mathbf{w}}^n x_0) = 0 \quad \mathrm{if} \quad n \notin \widetilde{\mathcal{M}} - \{1\}.
\end{align*}
From this and Lemma~\ref{lem-1-subs-upper}, to prove that $x_0 \notin \mathrm{UFHC}(B_{\mathbf{w}})$ it suffices to show that ${\underline{\mathrm{d}}\bigl(\mathbb{N} \setminus (\widetilde{\mathcal{M}} - \{1\})\bigr) = 1,}$
which is equivalent to ${\underline{\mathrm{d}}(\mathbb{N} \setminus \widetilde{\mathcal{M}}) = 1}$.
Since
$$\psi_{k}(1) < \psi_{k+1}(1) \quad \mathrm{for \, \, each} \, \, k\geq 1,$$
it follows from \eqref{eq-17-11-25-2100} that for any $k \geq 1$,
\begin{align} \label{eq-11-08-G}
\dfrac{\big|(\mathbb{N} \setminus \widetilde{\mathcal{M}}) \cap \big[0, \mathfrak{g}(\psi_k(1))\big]\big|}{\mathfrak{g}(\psi_k(1))+1} = \min_{\mathfrak{p}(\psi_k(1)) \leq s < \mathfrak{p}(\psi_{k+1}(1))} \dfrac{\big|(\mathbb{N}\setminus \widetilde{\mathcal{M}})\cap [0,s]\big|}{s+1}.
\end{align}
On the other hand, for each $k \geq 2$,
\begin{align} \label{eq-17-11-25-2127}
	\begin{split}
	\dfrac{\big|(\mathbb{N} \setminus \widetilde{\mathcal{M}}) \cap \big[0, \mathfrak{g}(\psi_k(1))\big]\big|}{\mathfrak{g}(\psi_k(1))+1} &\stackrel{\eqref{eq-17-11-25-2100}}{\geq} \dfrac{\mathfrak{p}(\psi_{k}(1))-\mathfrak{g}(\psi_{k-1}(1))}{\mathfrak{g}(\psi_k(1))+1} = \dfrac{\mathfrak{p}(\psi_{k}(1))-\mathfrak{g}(\psi_{k-1}(1))}{\mathfrak{p}(\psi_k(1))+|\psi_{k}(1)|} \\&\hspace{-0.18in}\stackrel{\ref{B2}\&\ref{D2}}{\geq} \dfrac{\mathfrak{p}(\psi_{k}(1))-\mathfrak{g}(\psi_{k-1}(1))}{\mathfrak{p}(\psi_k(1))+\max\{\mathfrak{g}(\psi_{k-1}(1)), \tau(k)\}} \stackrel{\ref{A2}}{\geq} \dfrac{k-1}{k}.
	\end{split}
\end{align}
Thus, since
$$\mathbb{N} = \bigcup_{k \geq 1}\big[\mathfrak{p}(\psi_{k}(1)),\mathfrak{p}(\psi_{k+1}(1))\big[,$$
it follows from \eqref{eq-11-08-G} and \eqref{eq-17-11-25-2127} that ${\underline{\mathrm{d}}(\mathbb{N} \setminus \widetilde{\mathcal{M}}) = 1},$ as desired.

\section{Proof of Theorem \ref{thm-2}}
\label{Sec-5}

\subsection{Construction of a family of functions depending on $(\mathbf{w}_r)_{r \in \mathbb{Q}_{>1}}$}  Let $\rho : 2\mathbb{N} \to \mathbb{N} \times \mathbb{Q}_{>1}$ be a function such that each fiber over any element of $\mathbb{N} \times \mathbb{Q}_{>1}$ is infinite, and denote $\rho(k):=(\rho_1(k),\rho_2(k))$. Moreover, let $\mathbb{Q}_{>1} =: \{r_j : j \in \mathbb{N}\}$. For each $j \in \mathbb{N}$, fix a function $\theta_{r_j} : \mathbb{N} \to \mathbb{N}$ such that, for each $n \in \mathbb{N}$, the value $\theta_{r_j}(n)$ satisfies the following:
\begin{align} \label{eq-12-08-A}
\theta_{r_j}(n) \geq n \ \ \mathrm{and} \ \ \sum_{k \geq \theta_{r_j}(n)} \dfrac{(2 \, \|\mathbf{w}_{r_j}\|_\infty)^{np}}{\prod_{\ell=1}^k \big(1+\frac{r_j}{\ell}\big)^p} < 2^{-(n+j)}.
\end{align}
This is possible because, for any $\mu > 1$, it holds
$$
\sum_{k \geq 1} \dfrac{1}{\prod_{\ell=1}^k \big(1+\frac{\mu}{\ell}\big)^{p}} < \infty.
$$ 
We will construct a family of functions
\begin{align} \label{eq-12-08-R}
{\widetilde{\psi}}_{k} : [1, N_{k}]\cap\mathbb{N} \to \mathscr{P}_{int}, \quad \text{where } N_k \in \mathbb{N},
\end{align}
satisfying the following conditions:
\begin{enumerate}[label=(\Alph*3)]
		\item \label{it-A3} $\dfrac{\mathfrak{p}(\widetilde{\psi}_k(1)) - \mathfrak{g}(\widetilde{\psi}_{k-1}(N_{k-1}))}{\mathfrak{p}(\widetilde{\psi}_{k}(1))} \geq \dfrac{k-1}{k}$ for each $k \geq 2$;
        	\item \label{it-B3} For any $k \geq 3$ odd, it holds 
            \begin{align} \label{eq-21-08-1602}
            N_k=1, \ \ |\widetilde{\psi}_k(1)| = \mathfrak{g}(\widetilde{\psi}_{k-1}(N_{k-1})) \ \ \textrm{and} \ \ \prod_{j=\mathfrak{p}(\widetilde{\psi}_k(1))}^{\mathfrak{g}(\widetilde{\psi}_k(1))-1} \bigg(1 + \frac{k}{j}\bigg) \leq 2.
            \end{align}
            \item \label{it-C3} $\mathfrak{p}(\widetilde{\psi}_k(1)) - \mathfrak{g}(\widetilde{\psi}_{k-1}(1)) \geq \theta_{\rho_2(k)}\big(\rho_1(k)+\mathfrak{g}(\widetilde{\psi}_{k-1}(1))\big)$ if $k$ is even;
		\item \label{it-D3}$ |\widetilde{\psi}_k(u)| = \rho_1(k)$ if $1 \leq u \leq N_{k}$ and $k$ is even;
		\item \label{it-E3} $\mathfrak{p}(\widetilde{\psi}_k(u)) - \mathfrak{p}(\widetilde{\psi}_k(u-1)) = \theta_{\frac{\rho_1(k)+1}{\rho_1(k)}}(\rho_1(k))$ if $1 < u \leq {N_{k}}$ and $k$ is even;
        	\item \label{it-F3} $\dfrac{N_{k}}{\mathfrak{p}(\widetilde{\psi}_k(1))+ (N_{k}-1) \, \theta_{\frac{\rho_1(k)+1}{\rho_1(k)}}(\rho_1(k))} \geq \dfrac{1}{2 \cdot \theta_{\frac{\rho_1(k)+1}{\rho_1(k)}}(\rho_1(k))}$ if $k$ is even.
	\end{enumerate}

The construction of the functions ${(\widetilde{\psi}_k)}_{k \ge 1}$ follows the same lines as that of ${(\psi_k)}_{k \ge 1}$ in Subsection \ref{Sub-sec-4}, except for \eqref{eq-21-08-1602}, which requires the use of specific properties of the weights $\mathbf{w}_r$, $r \in \mathbb{Q}_{>1}$. For this reason, in what follows, we concisely indicate the minor differences and similarities in the constructions, making explicit the part concerning \eqref{eq-21-08-1602}.

Let us obtain the sequence ${(\widetilde{\psi}_k)}_{k \geq 1}$ by applying induction on $k$. For $k=1$, set $N_1:=1$ and $\widetilde{\psi}_1(1) := \{1\}$. Suppose that $\widetilde{\psi}_1, \widetilde{\psi}_{2}, \ldots, \widetilde{\psi}_k$ have been chosen satisfying conditions \ref{it-A3}-\ref{it-F3}. If $k$ is odd, we follow the same lines as in Subsection~\ref{Sub-sec-4} that led to \eqref{eq-06-08-A} and \eqref{eq-08-08-E}, replacing $\psi_k$ with $\widetilde{\psi}_k$, $\tau(k+1)$ with $\rho_1(k+1)$, and 
$$
\theta(\tau(k+1)+\mathfrak{g}(\psi_{k}(1)))
$$
with
$$
\theta_{\rho_2(k+1)}\bigl(\rho_1(k+1)+\mathfrak{g}(\widetilde{\psi}_{k}(1))\bigr).
$$
This allows us to define $\widetilde{\psi}_{k+1}(1)$ satisfying \ref{it-A3}, \ref{it-D3}, and \ref{it-C3}. Next, choose $N_{k+1}$ sufficiently large so that condition \ref{it-F3} holds for the interval $\widetilde{\psi}_{k+1}(1)$.  Similarly to \eqref{eq-05-08-B}, we complete the case of $k$ odd by defining
$$
\widetilde{\psi}_{k+1}(u+1) := u \cdot \theta_{\frac{\rho_1(k+1)+1}{\rho_1(k+1)}}(\rho_1(k+1)) + \widetilde{\psi}_{k+1}(1)
$$
for each $u = 1,2, \ldots, N_{k+1}-1$. This fully establishes conditions \ref{it-D3} and \ref{it-E3}.

Suppose that $k$ is even. In this case, set $N_{k+1} = 1$. Observe that 
$$\dfrac{n-\mathfrak{g}(\widetilde{\psi}_{k}(N_{k}))}{n} \xrightarrow{n \to \infty} 1 \ \ \mathrm{and} \ \ \prod_{j=n}^{n+\mathfrak{g}(\widetilde{\psi}_{k}(N_{k}))-2}\bigg(1 + \dfrac{k+1}{j}\bigg) \xrightarrow{n \to \infty} 1.$$
We may therefore choose $n' \in \mathbb{N}$ sufficiently large so that 
$$\dfrac{n'-\mathfrak{g}(\widetilde{\psi}_k(N_k))}{n'} \geq \dfrac{k}{k+1} \ \ \mathrm{and} \ \ \prod_{j=n'}^{n'+\mathfrak{g}(\widetilde{\psi}_{k}(N_{k}))-2}\bigg(1 + \dfrac{k+1}{j}\bigg) \leq 2.$$
Thus, by setting  
$$
\widetilde{\psi}_{k+1}(1) := \big[n',\, n' + \mathfrak{g}(\widetilde{\psi}_k(N_k)) - 1\big],
$$  
it follows immediately that $\widetilde{\psi}_{k+1}(1)$ satisfies \ref{it-A3} and \ref{it-B3}.

\subsection{Definition of a $(1+\varepsilon)$-isometric isomorphism $T: \ell_p \to \ell_p$} Fix $\varepsilon > 0$. Similarly to Subsection \ref{Subsec-3.2}, we will construct a sequence $(x_q)_{q \geq 1}$ of linearly independent vectors in $\ell_p$ such that the operator $T$, defined in \eqref{def-of-T} with this new choice of $(x_q)_{q}$, is a well-defined linear $(1+\varepsilon)$-isometric isomorphism onto its range.

Denote each fiber of $\rho$ over the element $(n,r) \in \mathbb{N} \times \mathbb{Q}_{>1}$ as follows: 
\begin{align} \label{eq-12-08-B}
\rho^{-1}(n,r) =: \{\widetilde{\varphi}(n,r,1) < \widetilde{\varphi}(n,r,2) < \cdots < \widetilde{\varphi}(n,r,i) < \cdots\} \subset 2\mathbb{N}.
\end{align}
From now on, we also denote
\begin{align} \label{eq-30-08-1011}
\varpi_{n,r}^{(i,u)} := \widetilde{\psi}_{\widetilde{\varphi}(n,r,i)}(u).
\end{align} 
Let $\{A_r^q :{(q,r)\in \mathbb{N}\times\mathbb{Q}_{>1}}\}$ be a disjoint partition of $\mathbb{N}$ such that each $A_r^q$ is infinite. For each $(n,q,r) \in \mathbb{N}^2 \times \mathbb{Q}_{>1}$, let $\Phi_{n,r}^{p,q} : A_r^q \to \mathscr{D}_n^{\mathbb{K},p}$ be an arbitrarily fixed bijection. This allows us to define $x_q = {(x_q^n)}_{n \geq 1} \in \mathbb{K}^{\mathbb{N}}$, $q \in \mathbb{N}$, such that
	\begin{align} \label{eq-sec2-1-open}
		x_q := e_{\mathfrak{g}(\widetilde{\psi}_{2q-1}(1))} + \varepsilon \, \widetilde{x}_q,
        \end{align}
        where 
        \begin{align} \label{eq-20-08-0008}
        \widetilde{x}_q := \sum_{n \geq 1} \sum_{j \geq 1} \sum_{i \in A_{r_j}^q} \left[\sum_{u =1 }^{N_{\widetilde{\varphi}(n,r_j,i)}} \vartheta_{\varpi_{n,r_j}^{(i,u)}}^{{\bf w}_{r_j}}(\Phi_{n,r_j}^{p,q}(i))\right].
	\end{align}  
We will now show that $\widetilde{x}_q \in \overline{B}_{\ell_p}$ (hence $x_q \in \ell_p$). Before doing so, let us first note that, indeed, $x_q \in \mathbb{K}^{\mathbb{N}}$.  We claim that the family of intervals of the form $\varpi_{n,r_j}^{(i,u)}$, which appear (and determine) each term $\vartheta_{\varpi_{n,r_j}^{(i,u)}}^{\mathbf{w}_{r_j}}(\Phi_{n,r_j}^{p,q}(i))$ in the summation in \eqref{eq-20-08-0008}, are pairwise disjoint. Indeed, \ref{it-A3}, \ref{it-D3}, \ref{it-E3}, and the first condition in \eqref{eq-12-08-A} immediately yield
\begin{align} \label{eq-09-08-M-open}
\small \begin{split}
   \widetilde{\psi}_{k}(u) \cap \widetilde{\psi}_{k'}(u') = \emptyset \quad \mathrm{if} \quad (k,u) \neq (k',u').
   \end{split}
\end{align}  
Using this, we next show that 
\begin{align} \label{eq-08-08-A-open}
\varpi_{n,r}^{(i,u)} \cap \varpi_{n',r'}^{(i',u')} = \emptyset 
\quad \mathrm{if} \quad (n,r,i,u) \neq (n',r',i',u').
\end{align}  
Suppose $(n,r,i,u) \neq (n',r',i',u')$. If $(n,r) \neq (n',r')$, then $\widetilde{\varphi}(n,r,i) \neq \widetilde{\varphi}(n',r',i')$, since these numbers lie in different fibers over $\rho$, and \eqref{eq-08-08-A-open} follows from \eqref{eq-09-08-M-open}. If $(n,r) = (n',r')$ and $i \neq i'$, then $\widetilde{\varphi}(n,r,i)$ and $\widetilde{\varphi}(n',r',i')$ lie in the same fiber. Since $(\widetilde{\varphi}(n,r,j))_{j \ge 1}$ is increasing, we have $\widetilde{\varphi}(n,r,i) \neq \widetilde{\varphi}(n',r',i')$, so \eqref{eq-08-08-A-open} follows from \eqref{eq-09-08-M-open}. Finally, if $(n,r,i) = (n',r',i')$ and $u \neq u'$, then \eqref{eq-08-08-A-open} again follows directly from \eqref{eq-09-08-M-open}.  Thus, since  
\begin{align} \label{eq-10-08-A}
\mathrm{supp}\!\left(\vartheta_{\varpi_{n,r}^{(i,u)}}^{{\bf w}_r}
(\Phi_{n,r}^{p,q}(i))\right) \subset \varpi_{n,r}^{(i,u)},
\end{align}  
we conclude that $x_q \in \mathbb{K}^{\mathbb{N}}$.

Fix $j, n, q \in \mathbb{N}$. For later use, note that, given $i \in A_{r_j}^q$, it follows from $|\varpi_{n,r_j}^{(i,u)}| = n$ together with \eqref{eq-08-08-A-open} that
\begin{align} \label{eq-14-08-A}
\small \begin{split}
\prod_{\ell = 1}^{\mathfrak{p}
(\varpi_{n,r_j}^{(i,u)}) + s - 2} \left( 1 + \frac{r_j}
{\ell} \right)^{p} 
\neq 
\prod_{\ell = 1}^{\mathfrak{p}
(\varpi_{n,r_j}^{(i',u')}) + s' - 2} \left( 1 + \frac{r_j}{\ell} \right)^{p}
\quad \mathrm{if} \quad (i,u,s) \neq 
(i',u',s'),
\end{split}
\end{align}
where $i,i' \in A_{r_j}^q$, $1 \leq u \leq N_{\widetilde{\varphi}(n,r_j,i)}$, $1 \leq u' \leq N_{\widetilde{\varphi}(n,r_j,i')}$, and $1 \leq s,s' \leq n$.

Given $(n,q,r) \in \mathbb{N}^2 \times \mathbb{Q}_{>1}$ and $i \in A^q_r$, we adopt the following notation:
	\begin{align} \label{eq-sec2-2-open}
		\Phi_{n,r}^{p,q}(i) =: (z_{r,i,1}^{p,q}, \ldots, z_{r,i,n}^{p,q}) \in \mathscr{D}_n^{\mathbb{K},p} \subset B_{\ell_p^n}.
	\end{align}
    Thus, for any $q \in \mathbb N$, we have
    \begin{align*}
			\sum_{j,n \geq 1} \sum_{i \in A_{r_j}^q}\sum_{u=1}^{N_{\widetilde{\varphi}(n,r_j,i)}} &\big\|\vartheta_{\varpi_{n,r_j}^{(i,u)}}^{{\bf w}_{r_j}}(\Phi_{n,r_j}^{p,q}(i))\big\|_p^p 
            \stackrel{\eqref{eq-aux-10}+\eqref{eq-sec2-2-open}}{\leq} \sum_{j,n \geq 1} \sum_{i \in A_{r_j}^{q}} \sum_{u=1}^{N_{\widetilde{\varphi}(n,r_j,i)}} \sum_{s=1}^n \dfrac{|z_{r,i,s}^{p,q}|^p}{\prod_{\ell = s}^{\mathfrak{p}(\varpi_{n,r_j}^{(i,u)})+s-2} {(1 + \frac{r_j}{\ell})}^p} 
            \\    &\hspace{-0.05in}\stackrel{\eqref{eq-sec2-2-open}}{\leq} \sum_{j,n \geq 1} \sum_{i \in A_{r_j}^{q}} \sum_{u=1}^{N_{\widetilde{\varphi}(n,r_j,i)}} \sum_{s=1}^n \dfrac{1}{\prod_{\ell = s}^{\mathfrak{p}(\varpi_{n,r_j}^{(i,u)})+s-2} {(1 + \frac{r_j}{\ell})}^p}  = (\diamondsuit)
\end{align*}
For each $(j, n, q) \in \mathbb{N}^3$, denote by $\varpi_{n,r_j}^{(i_n^q,u_n)}$ the interval such that
\begin{align} \label{eq-30-08-1635}
\mathfrak{p}(\varpi_{n,r_j}^{(i_n^q,u_n)}) = \mathfrak{p}\big(\{\mathfrak{p}(\varpi_{n,r_j}^{(i,u)}) : (i,u) \in A_{r_j}^q \times [1,N_{\widetilde{\varphi}(n,r_j,i)}]\}\big).
\end{align}
In words, the interval $\varpi_{n,r_j}^{(i_n^q,u_n)}$ is the leftmost among all intervals of the form $\varpi_{n,r_j}^{(i,u)}$ that appear in the triple summation inside the brackets in $(\diamondsuit)$. By \ref{it-E3}, we have $u_n = 1$. From this, together with \eqref{eq-14-08-A} and \eqref{eq-30-08-1635}, it follows that
             \begin{align*}
            \small \begin{split}
             (\diamondsuit) &\leq \sum_{j,n \geq 1}\left[\sum_{k \geq \mathfrak{p}(\varpi_{n,r_j}^{(i_n^q,1)})-1}\dfrac{1}{\prod_{\ell=1}^k {(1+\frac{r_j}{\ell})}^p}\right] 
             \leq \sum_{j,n \geq 1}\left[\sum_{k \geq \mathfrak{p}(\varpi_{n,r_j}^{(i_n^q,1)})-\mathfrak{g}(\widetilde{\psi}_{\widetilde{\varphi}(n,r_j,i_n^q)-1}(1))}\dfrac{1}{\prod_{\ell=1}^k {(1+\frac{r_j}{\ell})}^p}\right]
             \\ 
             &\hspace{-0.3in}\stackrel{(\mathrm{by} \, \ref{it-C3})+\eqref{eq-12-08-B}}{\leq} \sum_{j,n \geq 1}\left[\sum_{k \geq \theta_{r_j}\!\big(n+\mathfrak{g}(\widetilde{\psi}_{\widetilde{\varphi}(n,r_j,i_n^q)-1}(1))\big)}\dfrac{1}{\prod_{\ell=1}^k {(1+\frac{r_j}{\ell})}^p}\right]
            \stackrel{\eqref{eq-12-08-A}+(\|\mathbf{w}_{r_j}\|_\infty \geq 1)}{\leq} \sum_{j,n \geq 1} 2^{-(j+n)} = 1.
             \end{split}
	\end{align*}
From this calculation and arguing as in \eqref{eq-30-08-1736}, we can conclude that
\begin{align*}
\sum_{\ell \geq 1}|\pi_\ell(\widetilde{x}_q)|^p = \sum_{j,n \geq 1} \sum_{i \in A_{r_j}^q}\sum_{u=1}^{N_{\varphi(n,r_j,i)}} &\big\|\vartheta_{\varpi_{n,r_j}^{(i,u)}}^{{\bf w}_{r_j}}(\Phi_{n,r_j}^{p,q}(i))\big\|_p^p  \leq 1,
\end{align*}
and therefore $\widetilde{x}_q \in \overline{B}_{\ell_p}$.

Finally, the proof that $T$ is a well-defined $(1+\varepsilon)$-isometric isomorphism follows, line by line, the argument in Subsection~\ref{Subsec-3.2}, particularly those in \eqref{eq-aux-4} and \eqref{eq-sec2-B}.

\subsection{The range of $T$ lies (up to the zero vector) in $\bigcap_{\mu > 1}\mathrm{UFHC}(B_{\mathbf{w}_\mu})$}  Fix some $x_0 \in T(\ell_p) \setminus \{0\}$ as in \eqref{eq-sec2-C}, with $x_q$ as in \eqref{eq-sec2-1-open}-\eqref{eq-20-08-0008}. Also fix some $\mu > 1$. Our goal is to prove that $x_0 \in \mathrm{UFHC}(B_{\mathbf{w}_{\mu}})$. Similarly to \eqref{eq-25-07-A}, without loss of generality, we may suppose that $\lambda_{q_0} = 1$. Following the arguments of Subsection~\ref{subsection-3.3}, we will use the Bourdon–Feldman theorem for upper frequently hypercyclic operators. Firstly, let $W \subset \varepsilon B_{\ell_p}$ be a non-empty open subset of $\ell_p$, and choose $y_0 \in W$ and $\delta > 0$ such that $B(y_0,\delta) \subset W$. To conclude the proof, we need to show that $\overline{d}(\mathcal{N}_{B_{\mathbf{w}_\mu}}(x_0,B(y_0,\delta)))>0$. Let $n_0 \in \mathbb{N}$ be chosen satisfying the following conditions:
\begin{align} \label{eq-19-08-F}
2^{-n_0+2} <  \min\bigg\{\dfrac{\delta^p}{8^p \varepsilon^p}, \dfrac{\delta^p}{\varepsilon^p \, 2^{p+1} \, \|\lambda\|_p^p}\bigg\} =: \widetilde{\eta}_0 \ \ \mathrm{and} \ \ \alpha_0 := \dfrac{n_0+1}{n_0} \leq \dfrac{\mu+1}{2}.
\end{align}
Let $y_1$ be chosen as in \eqref{eq-16-08-A} so that \eqref{eq-25-07-F} holds for $n^*,$ defined in terms of the new $n_0$ chosen above in \eqref{eq-19-08-F}. That is,
\begin{align} \label{eq-02-09-1038}
\mathfrak{g}(\mathrm{supp}(y_1)) = n^* \geq n_0.
\end{align}

Since for each $r \in \mathbb{Q}_{>1}$ the mapping $\Phi_{n^*,r}^{p,q_0} : A_{r}^{q_0} \to \mathscr{D}_{n^*}^{\mathbb{K},p}$ is a bijection, it follows that for every $r \in \mathbb{Q}_{>1}$ there exists 
$i_{n^*,r}^{p,q_0} \in A_r^{q_0}$ such that 
\begin{align} \label{eq-19-08-1829}
\Phi_{n^*,r}^{p,q_0}\bigl(i_{n^*,r}^{p,q_0}\bigr) = y_1.
\end{align} 
To simplify notation, we henceforth set $i_{r} := i_{n^*,r}^{p,q_0}$ for each $r \in \mathbb{Q}_{>1}$. For each $r \in \mathbb{Q}_{>1}$ and each natural number $m \in \{2, 3, \ldots, N_{\widetilde{\varphi}(n^*,r,i_r)}\}$, the functions
\begin{align} \label{eq-19-08-2041}
\begin{split}
\Lambda_{r} : \xi \in \mathbb{R}_{>1} \longmapsto \max_{1 \leq u \leq N_{\widetilde{\varphi}(n^*,r,i_r)}} \sum_{s=1}^{n^*} \left|\bigg(\prod_{\ell=s}^{\mathfrak{p}(\varpi_{n^*,r}^{(i_r,u)})+s-2} \dfrac{\ell+\xi}{\ell+r}\bigg) - 1\right|^p \in \mathbb{R}
\end{split}
\end{align}
and
\begin{align} \label{eq-19-08-C}
\Gamma_{r}^m : \xi \in \mathbb{R}_{>1} \longmapsto \sum_{u=m}^{N_{\widetilde{\varphi}(n^*,r,i_r)}} \sum_{s=1}^{n^*} \dfrac{ \prod_{\ell=\mathfrak{p}(\varpi_{n^*,r}^{(i_r,u)})+s-\mathfrak{p}(\varpi_{n^*,r}^{(i_r,m-1)})}^{\mathfrak{p}(\varpi_{n^*,r}^{(i_r,u)})+s-2}\big(1+\frac{\xi}{\ell}\big)^p}{\prod_{\ell=s}^{\mathfrak{p}(\varpi_{n^*,r}^{(i_r,u)})+s-2}\big(1+\frac{r}{\ell}\big)^p} \in \mathbb{R}
\end{align}
are clearly continuous. By the continuity of these functions, for each $r \in \mathbb{Q}_{>1}$ there exists
\begin{align} \label{eq-19-08-E}
\delta_r \in \left(0,\dfrac{\mu-1}{2}\right)
\end{align}
such that
\begin{align} \label{eq-19-08-A}
\Lambda_r(r-\delta_r, r+\delta_r) \subset \bigg[0,\, \dfrac{\delta^p}{8^p \varepsilon^p} \, \bigg)
\end{align}
and
\begin{align} \label{eq-19-08-B}
\Gamma_r^m(r-\delta_r, r+\delta_r) \subset \bigg(\Gamma_r^m(r)-\dfrac{\widetilde{\eta}_0}{4},\Gamma_r^m(r)+\dfrac{\widetilde{\eta}_0}{4}\bigg)
\end{align}
for each $2 \leq m \leq N_{\widetilde{\varphi}(n^*,r,i_r)}$. Thus, using the fact that the set $\mathbb{Q}_{>1} \setminus F$ is dense in $\mathbb{R}_{\geq 1}$ for any finite subset $F \subset \mathbb{Q}_{>1}$, we can easily to conclude that there exists a sequence $(v_k)_{k \geq 1} \subset \mathbb{Q}_{>1}$ with pairwise distinct terms such that
\begin{align} \label{eq-19-08-D}
\mu \in \bigcap_{k \geq 1} (v_k-\delta_{v_k},v_k+\delta_{v_k}).
\end{align} 
Define
\begin{align*}
\widetilde{\mathcal{S}} := \bigg\{\mathfrak{p}(I)-1 : I \in \bigcup_{k \geq 1} \mathrm{Im}\big(\widetilde{\psi}_{\widetilde{\varphi}(n^*,v_k,i_{v_k})}\big)\bigg\}.
\end{align*}
Given such a set $\widetilde{\mathcal{S}}$, we now prove that $\overline{d}(\mathcal{N}_{B_{\mathbf{w}_{\mu}}}(x_0, B(y_0,\delta))) > 0$ by showing the existence of some $s_0 \in \mathbb{N}$ that satisfies the following properties:
\begin{enumerate}[label=($\alph*'$)]
\item \label{item-17-08-A} $B_{\mathbf{w}_\mu}^{s'}x_0 \in B(y_0,\delta)$ whenever $s' \in \widetilde{\mathcal{S}}$ and $s' > s_0$.
\item \label{item-17-08-B}  $\overline{d}(\widetilde{\mathcal{S}})>0$.
\end{enumerate} 
To prove \ref{item-17-08-A}, we select natural numbers $l_1 \geq l_0$ such that
\begin{align} \label{eq-02-09-2356}
\sum_{l \geq l_0} |\lambda_l|^p < \dfrac{\delta^p}{2^{p+2}} \ \ \mbox{and} \ \ 
B_{\mathbf{w}_\mu}^{l_1}(e_{\mathfrak{g}(\widetilde{\psi}_{2q-1}(1))})=0 \ \ \mathrm{if} \ \ q < \max\left\{l_0, \dfrac{1+\mu}{2}\right\}.
\end{align}

From the unboundedness of $\widetilde{\mathcal{S}}$ in $\mathbb{N}$, there exists
$$
(k_0,u_0) \in \mathbb{N} \times \big([1,N_{\widetilde{\varphi}(n^*,v_{k_0},i_{v_{k_0}})}]\cap\mathbb{N}\big)
$$
such that $\widetilde{I}_0 := \varpi_{n^*,v_{k_0}}^{(i_{v_{k_0}},u_0)}$ satisfies the following conditions:
\begin{align} \label{eq-19-08-G}
\small \begin{split}
2^{-\mathfrak{g}(\widetilde{I}_0)+1} 
   < \widetilde{\eta}_0, \ \ s_0 := \mathfrak{p}(\widetilde{I}_0)-1 \geq l_1
\end{split}
\end{align}
and
\begin{align} \label{eq-19-08-2333}
\mathfrak{p}(\widetilde{\psi}_k(1)) - \mathfrak{g}(\widetilde{\psi}_{k-1}(N_{k-1})) \geq \mu \  \ \mathrm{whenever} \ \ \mathfrak{p}(\widetilde{\psi}_k(1)) \geq \mathfrak{p}(\widetilde{I}_0) \ \ \mbox{and} \ \ k \in \mathbb{N}.
\end{align}  
The choice in \eqref{eq-19-08-2333} is possible due to condition \ref{it-A3}, because
$$\mathfrak{p}(\widetilde{\psi}_k(1))-\mathfrak{g}(\widetilde{\psi}_{k-1}(N_{k-1})) \geq \dfrac{k-1}{k} \, \mathfrak{p}(\widetilde{\psi}_k(1)) \xrightarrow{k \to \infty} \infty.$$

Now, by fixing
\begin{align} \label{eq-17-08-M}
s' := \mathfrak{p}(\varpi_{n^*,v_{k'}}^{(i_{v_{k'}},u')})-1 
   \in \widetilde{\mathcal{S}}, \quad \text{with } s' > s_0 \geq l_1,
\end{align}
and carrying out the same calculations as in \eqref{eq-10-08-D} (using \eqref{eq-sec2-1-open} in place of \eqref{eq-sec2-1}), we obtain
\begin{align*} 
\begin{split}
    \big\|B_{\mathbf{w}_\mu}^{s'}x_0 - y_0\big\|_p &\leq \underbrace{\big\| \varepsilon B_{\mathbf{w}_\mu}^{s'} \widetilde{x}_{q_0} - y_0\big\|_p}_{{\hypertarget{Iterm4}{\textbf{(IV)}}}} + \bigg[\underbrace{\sum_{q \geq q_0} |\lambda_q|^p \|B_{\mathbf{w}_\mu}^{s'}(e_{\mathfrak{g}(\widetilde{\psi}_{2q-1}(1))})\|_p^p}_{\hypertarget{Iterm5}{\textbf{(V)}}} + \underbrace{\varepsilon^p \sum_{q > q_0} |\lambda_q|^p \|B_{\mathbf{w}_\mu}^{s'}(\widetilde{x}_q)\|_p^p}_{\hypertarget{Iterm6}{\textbf{(VI)}}}\bigg]^{1/p}.
        \end{split}
    \end{align*} 
   From this, and by using 
   \begin{quote}
\underline{$\mathbf{Claim~\hypertarget{ItermC2}{(\heartsuit\heartsuit)}}$}: $\hyperlink{Iterm4}{\textbf{(IV)}} < \dfrac{\delta}{2}$, $\hyperlink{Iterm5}{\textbf{(V)}} < \dfrac{\delta^p}{2^{p+1}}$ and $\hyperlink{Iterm6}{\textbf{(VI)}} < \dfrac{\delta^p}{2^{p+1}}$;
\end{quote}
we easily obtain
\begin{align*}
\|B_{\mathbf{w}_\mu}^{s'}x_0 - y_0\|_p < \delta,
\end{align*}
thus completing the proof of \ref{item-17-08-A}.

Before proving Claim $\hyperlink{ItermC2}{(\heartsuit\heartsuit)},$ let us provide a short proof of \ref{item-17-08-B}. For any $k \in \mathbb{N}$, we have
	\begin{align*}
  \small  \begin{split}
&\dfrac{\big|\widetilde{\mathcal{S}} \cap \big[0,\mathfrak{p}(\widetilde{\psi}_{\widetilde{\varphi}(n^*,v_k,i_{v_k})}(N_{\widetilde{\varphi}(n^*,v_k,i_{v_k})}))\big[\big|}{\mathfrak{p}(\widetilde{\psi}_{\widetilde{\varphi}(n^*,v_k,i_{v_k})}(N_{\widetilde{\varphi}(n^*,v_k,i_{v_k})}))} 
        \geq \dfrac{\big|\{\mathfrak{p}(\widetilde{\psi}_{\widetilde{\varphi}(n^*,v_k,i_{v_k})}(u))-1 : 1 \leq u \leq N_{\widetilde{\varphi}(n^*,v_k,i_{v_k})}\}\big|}{\mathfrak{p}(\widetilde{\psi}_{\widetilde{\varphi}(n^*,v_k,i_{v_k})}(N_{\widetilde{\varphi}(n^*,v_k,i_{v_k})}))} 
        \\
        &\stackrel{(\mathrm{by} \, \ref{it-D3}\&\ref{it-E3})+\eqref{eq-12-08-B}}{\geq} \dfrac{N_{\widetilde{\varphi}(n^*,v_k,i_{v_k})}}{\mathfrak{p}(\widetilde{\psi}_{\widetilde{\varphi}(n^*,v_k,i_{v_k})}(1))+(N_{\widetilde{\varphi}(n^*,v_k,i_{v_k})}-1 ) \cdot \theta_{\frac{n^*+1}{n^*}}(n^*)} \stackrel{(\mathrm{by} \, \ref{it-F3})}{\geq} \dfrac{1}{2 \cdot \theta_{\frac{n^*+1}{n^*}}(n^*)}.
        \end{split}
	\end{align*}
This shows that
\begin{align*}
		\overline{d}(\widetilde{\mathcal{S}}) \geq \dfrac{1}{2 \cdot \theta_{\frac{n^*+1}{n^*}}(n^*)} > 0,
	\end{align*}
and the proof of \ref{item-17-08-B} is complete.

\vspace{0.2in}

\underline{\textbf{Proof of Claim$~\hyperlink{ItermC2}{(\heartsuit\heartsuit)}$}}
\\

$\bullet$ \underline{\textit{Estimate of \hyperlink{Iterm4}{\textbf{(IV)}}}}: Note that, similarly to \eqref{eq-07-08-D}, for each even $k$ we have
    \begin{align*}
      \begin{split}
    &\widetilde{\psi}_2(1)  < \cdots < \widetilde{\psi}_2(N_2) < \widetilde{\psi}_4(1)  < \cdots < \widetilde{\psi}_4(N_4) < \cdots < \widetilde{\psi}_{\widetilde{\varphi}(n^*,v_{k'},i_{v_{k'}})}(1) 
    \\ & <  \cdots < \widetilde{\psi}_{\widetilde{\varphi}(n^*,v_{k'},i_{v_{k'}})}(N_{\widetilde{\varphi}(n^*,v_{k'},i_{v_{k'}})}) < \widetilde{\psi}_{\widetilde{\varphi}(n^*,v_{k'},i_{v_{k'}})+2}(1) < \cdots
    \end{split}
    \end{align*} 
    Combining this with \eqref{eq-aux-10}, \eqref{eq-sec2-2-open}, \eqref{eq-17-08-M}, and with the definition of the weighted backward shift, we obtain
\begin{align} \label{eq-07-08-E-open}
		\small \begin{split}
\big\|B_{\mathbf{w}_\mu}^{s'}\vartheta_{\varpi_{n,r}^{(i,u)}}^{{\mathbf{w}_r}}\big(\Phi_{n,r}^{p,q}(i)\big)\big\|_p^p = 
			\displaystyle\sum_{s=1}^{n} \dfrac{\prod_{\ell=\mathfrak{p}(\varpi_{n,r}^{(i,u)})+s-s'-1}^{\mathfrak{p}(\varpi_{n,r}^{(i,u)})+s-2} (1+\frac{\mu}{\ell})^p}{\prod_{\ell=s}^{\mathfrak{p}(\varpi_{n,r}^{(i,u)})+s-2} (1+\frac{r}{\ell})^p} \cdot |z_{r,i,s}^{p,q}|^p \quad \mathrm{if} \ \  \varpi_{n,r}^{(i,u)} \geq \varpi_{n^*,v_{k'}}^{(i_{v_{k'}},u')}
	\end{split}
\end{align}
and
\begin{align} \label{eq-17-08-E}
\big\|B_{\mathbf{w}_\mu}^{s'}\vartheta_{\varpi_{n,r}^{(i,u)}}^{{\mathbf{w}_r}}\big(\Phi_{n,r}^{p,q}(i)\big)\big\|_p^p = 0 \quad \mathrm{if} \  \  \varpi_{n,r}^{(i,u)} < \varpi_{n^*,v_{k'}}^{(i_{v_{k'}},u')}.
\end{align} 

Define
\begin{align} \label{eq-19-08-H}
\widetilde{I}':=\varpi_{n^*,v_{k'}}^{(i_{v_{k'}},u')} \ \ \mbox{ and }  \ \ \widetilde{I}'':=\widetilde{\psi}_{\widetilde{\varphi}(n^*,v_{k'},i_{v_{k'}})}(N_{\widetilde{\varphi}(n^*,v_{k'},i_{v_{k'}})}).
\end{align}
Since $\widetilde{I}'' > \widetilde{I}' > \widetilde{I}_0$, it follows that for every $\varpi_{n,r}^{(i,u)} > \widetilde{I}''$,
\begin{align} \label{eq-31-08-1642}
\small \begin{split}
\mathfrak{p}(\varpi_{n,r}^{(i,u)})-s'-1 &\stackrel{(\mathrm{by} \, \ref{it-E3})\&\eqref{eq-17-08-M}}{\geq} \mathfrak{p}(\varpi_{n,r}^{(i,1)}) - \mathfrak{p}(\varpi_{n^*,v_{k'}}^{(i_{v_{k'}},u')})
\geq \mathfrak{p}(\varpi_{n,r}^{(i,1)})-\mathfrak{g}(\widetilde{\psi}_{\widetilde{\varphi}(n,r,i)-1}(1))
\stackrel{\eqref{eq-19-08-2333}}{\geq} \mu.
\end{split}
\end{align} 
Thus, for any $q \in \mathbb{N}$, we obtain
\begin{align*}
\sum_{n,j \geq 1} &\sum_{i \in A_{r_j}^q}\left[\sum_{\substack{u=1 \\ \varpi_{n,r_j}^{(i,u)} > \widetilde{I}''}}^{N_{\widetilde{\varphi}(n,r_j,i)}} \sum_{s=1}^n \dfrac{\prod_{\ell=\mathfrak{p}(\varpi_{n,r_j}^{(i,u)})+s-s'-1}^{\mathfrak{p}(\varpi_{n,r_j}^{(i,u)})+s-2}\big(1+\frac{\mu}{\ell}\big)^p}{\prod_{\ell=s}^{\mathfrak{p}(\varpi_{n,r_j}^{(i,u)})+s-2}\big(1+\frac{r_j}{\ell}\big)^p}\right]
\\
&\hspace{-0.05in}\stackrel{\eqref{eq-31-08-1642}}{\leq}
\sum_{n,j \geq 1} \sum_{i \in A_{r_j}^q}\left[\sum_{\substack{u=1 \\ \varpi_{n,r_j}^{(i,u)} > \widetilde{I}''}}^{N_{\widetilde{\varphi}(n,r_j,i)}} \sum_{s=1}^n \dfrac{2^{s'p}}{\prod_{\ell=s}^{\mathfrak{p}(\varpi_{n,r_j}^{(i,u)})+s-2}\big(1+\frac{r_j}{\ell}\big)^p}\right] = (\diamondsuit\diamondsuit)
\end{align*}
Let us now make an argument similar to the one used for estimate $\hyperlink{2-star}{(\star\star)}$ in Subsection~\ref{subsection-3.3}. For each $(n,j,q) \in \mathbb{N}^3$, we choose $m_{n,j}^q \in A_{r_j}^q$ such that the interval $\varpi_{n,r_j}^{(m_{n,j}^q,1)}$ is the leftmost among all intervals $\varpi_{n,r_j}^{(i,1)}$, $i \in A_{r_j}^q$, that lie strictly to the right of $\widetilde{I}''.$ Moreover, since the family of intervals of the form
$$
\big[\mathfrak{p}(\varpi_{n,r_j}^{(i,u)})-1, \mathfrak{p}(\varpi_{n,r_j}^{(i,u)}) + n - 2\big], \quad \, n \in \mathbb{N}, \, (i,u)\in A_{r_j}^q \times [1,N_{\widetilde{\varphi}(n,r_j,i)}],
$$
is pairwise disjoint, it follows that each term of the form 
$$
\frac{2^{s'p}}{\prod_{\ell=1}^{k}\big(1+\frac{r_j}{\ell}\big)^p}, \quad \ \ \mathrm{as} \, \,  k \, \,  \mathrm{varies,}
$$
appears at most once in the summation  $(\diamondsuit\diamondsuit)$. Moreover, by setting $$\varrho_{j}^{n} := \mathfrak{g}(\widetilde{\psi}_{\widetilde{\varphi}(n,r_j,m_{n,j}^q)-1}(1)),$$
we clearly obtain
\begin{align} \label{eq-01-09-2228}
\varrho^n_j \geq \mathfrak{g}(\widetilde{I}_0) \ \ \mathrm{and} \ \  \varrho_j^n \geq s'
\end{align}
whenever $\varpi_{n,r_j}^{(m_{n,j}^q,1)} > \widetilde{I}'' \, (> \widetilde{I}' >\widetilde{I}_0)$. Hence, the calculation is completed, yielding:
\begin{align*}
(\diamondsuit\diamondsuit) &\leq \sum_{n,j \geq 1} \left[\sum_{k \geq \mathfrak{p}(\varpi_{n,r_j}^{(m_{n,j}^q,1)})-1} \dfrac{2^{s'p}}{\prod_{\ell=1}^{k}\big(1+\frac{r_j}{\ell}\big)^p}\right]
\stackrel{\eqref{eq-01-09-2228}}{\leq} \sum_{n,j \geq 1} \left[\sum_{k \geq \mathfrak{p}(\varpi_{n,r_j}^{(m_{n,j}^q,1)})-\varrho_j^n} \dfrac{ 2^{\varrho_j^n p}}{\prod_{\ell=1}^{k}\big(1+\frac{r_j}{\ell}\big)^p}\right]
\\
&\hspace{-0.6in}\stackrel{\eqref{eq-12-08-B}+\eqref{eq-30-08-1011}+\ref{it-C3}+(\|\mathbf{w}_{r_j}\|_\infty \geq 1)}{\leq} \sum_{n,j \geq 1} \left[\sum_{k \geq \theta_{r_j}(n+\varrho_j^n)} \dfrac{(2{\|\mathbf{w}_{r_j}\|}_{\infty})^{(n+\varrho_j^n)p}}{\prod_{\ell=1}^{k}\big(1+\frac{r_j}{\ell}\big)^p}\right] 
\\
&\hspace{0.12in}\stackrel{\eqref{eq-12-08-A}}{\leq} \sum_{n,j \geq 1} 2^{-(n+j+\varrho_j^n)} \stackrel{\eqref{eq-01-09-2228}}{\leq} \sum_{n,j \geq 1} 2^{-(n+j+\mathfrak{g}(\widetilde{I}_0))} = 2^{-\mathfrak{g}(\widetilde{I}_0)}\stackrel{\eqref{eq-19-08-G}}{<} 
\dfrac{\widetilde{\eta}_0}{2}.
\end{align*}
Therefore, we have just proven that
\begin{align} \label{eq-19-08-1636}
\begin{split}
\sum_{n,j \geq 1} &\sum_{i \in A_{r_j}^q}\left[\sum_{\substack{u=1 \\ \varpi_{n,r_j}^{(i,u)} > \widetilde{I}''}}^{N_{\widetilde{\varphi}(n,r_j,i)}} \sum_{s=1}^n \dfrac{\prod_{\ell=\mathfrak{p}(\varpi_{n,r_j}^{(i,u)})+s-s'-1}^{\mathfrak{p}(\varpi_{n,r_j}^{(i,u)})+s-2}\big(1+\frac{\mu}{\ell}\big)^p}{\prod_{\ell=s}^{\mathfrak{p}(\varpi_{n,r_j}^{(i,u)})+s-2}\big(1+\frac{r_j}{\ell}\big)^p}\right] < \dfrac{\widetilde{\eta}_0}{2}.
\end{split}
\end{align} 
On the other hand, when $u' < N_{\widetilde{\varphi}(n^*,v_{k'},i_{v_{k'}})}$, we have
\begin{align*} 
&\sum_{u=u'+1}^{N_{\widetilde{\varphi}(n^*,v_{k'},i_{v_{k'}})}} \sum_{s=1}^{n^*} \dfrac{ \prod_{\ell=\mathfrak{p}(\varpi_{n^*,v_{k'}}^{(i_{v_{k'}},u)})+s-s'-1}^{\mathfrak{p}(\varpi_{n^*,v_{k'}}^{(i_{v_{k'}},u)})+s-2}\big(1+\frac{\mu}{\ell}\big)^p}{\prod_{\ell=s}^{\mathfrak{p}(\varpi_{n^*,v_{k'}}^{(i_{v_{k'}},u)})+s-2}\big(1+\frac{v_{k'}}{\ell}\big)^p} \stackrel{\eqref{eq-19-08-C}+\eqref{eq-19-08-B}+\eqref{eq-19-08-D}+\eqref{eq-17-08-M}}{<} \dfrac{\widetilde{\eta}_0}{4} + \Gamma_{v_{k'}}^{u'+1}(v_{k'})   
\\ 
&\hspace{0.3in}\stackrel{\eqref{eq-19-08-C}}{=} \dfrac{\widetilde{\eta}_0}{4} +
\sum_{u=u'+1}^{N_{\widetilde{\varphi}(n^*,v_{k'},i_{v_{k'}})}} \sum_{s=1}^{n^*} \dfrac{1}{\prod_{\ell=s}^{\mathfrak{p}(\varpi_{n^*,v_{k'}}^{(i_{v_{k'}},u)})+s-s'-2}\big(1+\frac{v_{k'}}{\ell}\big)^p} 
\\
&\hspace{-0.1in}\stackrel{\eqref{eq-19-08-F}+\eqref{eq-19-08-E}+\eqref{eq-19-08-D}}{\leq} \dfrac{\widetilde{\eta}_0}{4} +
\sum_{u=u'+1}^{N_{\widetilde{\varphi}(n^*,v_{k'},i_{v_{k'}})}} \sum_{s=1}^{n^*} \dfrac{1}{\prod_{\ell=s}^{\mathfrak{p}(\varpi_{n^*,v_{k'}}^{(i_{v_{k'}},u)})+s-s'-2}\big(1+\frac{\alpha_0}{\ell}\big)^p} 
\\
&\hspace{0.1in}\stackrel{\eqref{eq-19-08-F}+\eqref{eq-02-09-1038}}{\leq} \dfrac{\widetilde{\eta}_0}{4} +
\sum_{u=u'+1}^{N_{\widetilde{\varphi}(n^*,v_{k'},i_{v_{k'}})}} \sum_{s=1}^{n^*} \dfrac{1}{\prod_{\ell=s}^{\mathfrak{p}(\varpi_{n^*,v_{k'}}^{(i_{v_{k'}},u)})+s-s'-2}\big(1+\frac{\alpha^*}{\ell}\big)^p} 
\\
&\hspace{3.5in} \left(\mathrm{where} \ \ \alpha^*:=\dfrac{n^*+1}{n^*}\right)
\\
&\hspace{0.13in}\stackrel{(\|\mathbf{w}_{\alpha^*}\|_\infty \geq 1)}{\leq} \dfrac{\widetilde{\eta}_0}{4} +
\sum_{u=u'+1}^{N_{\widetilde{\varphi}(n^*,v_{k'},i_{v_{k'}})}} \sum_{s=1}^{n^*} \dfrac{{\|\mathbf{w}_{\alpha^*}\|}_{\infty}^{n^*p}}{\prod_{\ell=1}^{\mathfrak{p}(\varpi_{n^*,v_{k'}}^{(i_{v_{k'}},u)})+s-s'-2}\big(1+\frac{\alpha^*}{\ell}\big)^p}  = (\diamondsuit\diamondsuit\diamondsuit)
\end{align*} 
Analogously to our argument in $(\diamondsuit\diamondsuit)$ above (see also estimate $\hyperlink{2-star}{(\star\star)}$ in Subsection~\ref{subsection-3.3}), we can conclude that terms of the form
$$\dfrac{\|\mathbf{w}_{\alpha^*}\|_\infty^{n^*p}}{\prod_{\ell=1}^j\big(1+\frac{\alpha^*}{\ell}\big)^p}, \quad \text{as $j$ varies},$$
appear at most once in the summation $(\diamondsuit\diamondsuit\diamondsuit)$.  
Therefore, we may continue the estimate of $(\diamondsuit\diamondsuit\diamondsuit)$ as follows:
\begin{align*}
&(\diamondsuit\diamondsuit\diamondsuit) \leq \dfrac{\widetilde{\eta}_0}{4} +
\sum_{j \geq \mathfrak{p}(\varpi_{n^*,v_{k'}}^{(i_{v_{k'}},u'+1)})-\mathfrak{p}(\varpi_{n^*,v_{k'}}^{(i_{v_{k'}},u')})} \dfrac{{\|\mathbf{w}_{\alpha^*}\|}_{\infty}^{n^*p}}{\prod_{\ell=1}^{j}\big(1+\frac{\alpha^*}{\ell}\big)^p}
\\
&\hspace{-0.05in}\stackrel{(\mathrm{by} \, \ref{it-E3})+\eqref{eq-12-08-B}+\eqref{eq-30-08-1011}}{=} \dfrac{\widetilde{\eta}_0}{4} + \sum_{j \geq \theta_{\alpha^*}(n^*)} \dfrac{{\|\mathbf{w}_{\alpha^*}\|}_{\infty}^{n^*p}}{\prod_{\ell=1}^{j}\big(1+\frac{\alpha^*}{\ell}\big)^p} 
\\
&\hspace{0.4in}\stackrel{\eqref{eq-12-08-A}}{<} \dfrac{\widetilde{\eta}_0}{4} + 2^{-n^*}  \stackrel{\eqref{eq-19-08-F}+\eqref{eq-02-09-1038}}{<} \dfrac{\widetilde{\eta}_0}{2}.
\end{align*} 
Hence, when $u' < N_{\widetilde{\varphi}(n^*,v_{k'},i_{v_{k'}})}$, we have just proved the following estimate:
\begin{align} \label{eq-02-09-1722}
\sum_{u=u'+1}^{N_{\widetilde{\varphi}(n^*,v_{k'},i_{v_{k'}})}} \sum_{s=1}^{n^*} \dfrac{ \prod_{\ell=\mathfrak{p}(\varpi_{n^*,v_{k'}}^{(i_{v_{k'}},u)})+s-s'-1}^{\mathfrak{p}(\varpi_{n^*,v_{k'}}^{(i_{v_{k'}},u)})+s-2}\big(1+\frac{\mu}{\ell}\big)^p}{\prod_{\ell=s}^{\mathfrak{p}(\varpi_{n^*,v_{k'}}^{(i_{v_{k'}},u)})+s-2}\big(1+\frac{v_{k'}}{\ell}\big)^p} < \dfrac{\widetilde{\eta}_0}{2}.
\end{align}  
We observe that in some of the calculations below, the summation in \eqref{eq-02-09-1722} for the case $u' = N_{\widetilde{\varphi}(n^*,v_{k'},i_{v_{k'}})}$ simply does not appear. Thus, in this case, we adopt the convention that this summation is equal to zero, and hence it trivially satisfies inequality \eqref{eq-02-09-1722}. Let us now put \eqref{eq-19-08-1636} and \eqref{eq-02-09-1722} together:
\begin{align*}
\sum_{n,j \geq 1}&\sum_{i \in A_{r_j}^q}\left[\sum_{\substack{u=1 \\ \varpi_{n,r_j}^{(i,u)} \not= \widetilde{I}'}}^{N_{\widetilde{\varphi}(n,r_j,i)}}\big\|B_{\mathbf{w}_{\mu}}^{s'}\vartheta^{\mathbf{w}_{r_j}}_{\varpi_{n,r_j}^{(i,u)}}\big(\Phi_{n,r_j}^{p,q}(i)\big)\big\|_p^p\right]
\\
&\hspace{-0.5in}\stackrel{\eqref{eq-sec2-2-open}+\eqref{eq-07-08-E-open}+\eqref{eq-17-08-E}}{\leq} \sum_{n,j \geq 1} \sum_{i \in A_{r_j}^q}\left[\sum_{\substack{u=1 \\ \varpi_{n,r_j}^{(i,u)} > \widetilde{I}'}}^{N_{\widetilde{\varphi}(n,r_j,i)}} \sum_{s=1}^n \dfrac{\prod_{\ell=\mathfrak{p}(\varpi_{n,r_j}^{(i,u)})+s-s'-1}^{\mathfrak{p}(\varpi_{n,r_j}^{(i,u)})+s-2}\big(1+\frac{\mu}{\ell}\big)^p}{\prod_{\ell=s}^{\mathfrak{p}(\varpi_{n,r_j}^{i,u})+s-2}\big(1+\frac{r_j}{\ell}\big)^p}\right]
\\ 
&= \sum_{u=u'+1}^{N_{\widetilde{\varphi}(n^*,v_{k'},i_{v_{k'}})}} \sum_{s=1}^{n^*} \dfrac{ \prod_{\ell=\mathfrak{p}(\varpi_{n^*,v_{k'}}^{(i_{v_{k'}},u)})+s-s'-1}^{\mathfrak{p}(\varpi_{n^*,v_{k'}}^{(i_{v_{k'}},u)})+s-2}\big(1+\frac{\mu}{\ell}\big)^p}{\prod_{\ell=s}^{\mathfrak{p}(\varpi_{n^*,v_{k'}}^{(i_{v_{k'}},u)})+s-2}\big(1+\frac{v_{k'}}{\ell}\big)^p}
\\ 
&\hspace{0.2in}+
\sum_{n,j \geq 1}\sum_{i \in A_{r_j}^q}\left[\sum_{\substack{u=1 \\ \varpi_{n,r_j}^{(i,u)} > \widetilde{I}''}}^{N_{\widetilde{\varphi}(n,r_j,i)}} \sum_{s=1}^n \dfrac{\prod_{\ell=\mathfrak{p}(\varpi_{n,r_j}^{(i,u)})+s-s'-1}^{\mathfrak{p}(\varpi_{n,r_j}^{(i,u)})+s-2}\big(1+\frac{\mu}{\ell}\big)^p}{\prod_{\ell=s}^{\mathfrak{p}(\varpi_{n,r_j}^{(i,u)})+s-2}\big(1+\frac{r_j}{\ell}\big)^p}\right]
\\
&\hspace{-0.2in}\stackrel{\eqref{eq-19-08-1636}+\eqref{eq-02-09-1722}}{<} \dfrac{\widetilde{\eta}_0}{2} + \dfrac{\widetilde{\eta}_0}{2} = \widetilde{\eta}_0.
\end{align*}
That is,
\begin{align} \label{eq-19-08-1650}
\sum_{n,j \geq 1}&\sum_{i \in A_{r_j}^q}\left[\sum_{\substack{u=1 \\ \varpi_{n,r_j}^{(i,u)} \not= \widetilde{I}'}}^{N_{\widetilde{\varphi}(n,r_j,i)}}\big\|B_{\mathbf{w}_{\mu}}^{s'}\vartheta^{\mathbf{w}_{r_j}}_{\varpi_{n,r_j}^{(i,u)}}\big(\Phi_{n,r_j}^{p,q}(i)\big)\big\|_p^p\right]  < \widetilde{\eta}_0.
\end{align} 
We shall use \eqref{eq-19-08-1650} twice. The first use concerns the particular case $q = q_0$, which we consider now:
\begin{align*}
\|\varepsilon B_{\mathbf{w}_\mu}^{s'}& \widetilde{x}_{q_0} - y_0\|_p \stackrel{\eqref{eq-20-08-0008}}{=} \bigg\| \varepsilon B_{\mathbf{w}_\mu}^{s'} \bigg(\sum_{n,j \geq 1} \sum_{i \in A_{r_j}^{q_0}} \sum_{u =1 }^{N_{\widetilde{\varphi}(n,r_j,i)}} \vartheta_{\varpi_{n,r_j}^{(i,u)}}^{{\bf w}_{r_j}}(\Phi_{n,r_j}^{p,q_0}(i))\bigg) - y_0\bigg\|_p 
\\
&\leq {\|\varepsilon B_{\mathbf{w}_\mu}^{s'}\big(\vartheta_{\widetilde{I}'}^{\mathbf{w}_{v_{k'}}}(\Phi_{n^*,v_{k'}}^{p,q_0}(i_{v_{k'}}))\big)-y_0\|}_p
+
\varepsilon  \left[\sum_{n,j \geq 1} \sum_{i \in A_{r_j}^{q_0}} \sum_{\substack{u =1 \\ \varpi_{n,r_j}^{(i,u)} \not= \widetilde{I}'} }^{N_{\widetilde{\varphi}(n,r_j,i)}} \|B_{\mathbf{w}_\mu}^{s'} \vartheta_{\varpi_{n,r_j}^{(i,u)}}^{{\bf w}_{r_j}}(\Phi_{n,r_j}^{p,q_0}(i))\|_p^p\right]^{\nicefrac{1}{p}}
\\
&\hspace{-0.3in}\stackrel{\eqref{eq-19-08-1829}+\eqref{eq-19-08-1650}}{<} {\|\varepsilon B_{\mathbf{w}_\mu}^{s'}\big(\vartheta_{\widetilde{I}'}^{\mathbf{w}_{v_{k'}}}(y_1)\big)-y_0\|}_p + \varepsilon \, (\widetilde{\eta}_0)^{\nicefrac{1}{p}} \stackrel{\eqref{eq-16-08-A}}{\leq}{\|\varepsilon B_{\mathbf{w}_\mu}^{s'}\big(\vartheta_{\widetilde{I}'}^{\mathbf{w}_{v_{k'}}}(y_1)\big)-\varepsilon \, y_1\|}_p + \dfrac{\delta}{4} + \varepsilon \, (\widetilde{\eta}_0)^{\nicefrac{1}{p}}
\\
&=\varepsilon \left[\sum_{s=1}^{n^*} \left|\bigg(\prod_{\ell=s}^{\mathfrak{p}(\widetilde{I}')+s-2} \dfrac{\ell+\mu}{\ell+v_{k'}}\bigg) - 1\right|^p |y^1_{s}|^p\right]^{1/p}  + \dfrac{\delta}{4} + \varepsilon \, (\widetilde{\eta}_0)^{\nicefrac{1}{p}}
\\
&\hspace{2in}\left(\mathrm{where} \, \, y_1:=(y^1_1,y^1_2,\ldots,y^1_{n^*})\right)
\\
&\hspace{-0.35in}\stackrel{\eqref{eq-16-08-A}+\eqref{eq-19-08-2041}+\eqref{eq-19-08-A}+\eqref{eq-19-08-D}}{<} \varepsilon \dfrac{\delta}{8\varepsilon} + \dfrac{\delta}{4} + \varepsilon \dfrac{\delta}{8\varepsilon} = \dfrac{\delta}{2}.
\end{align*} 

$\bullet$ \underline{\textit{Estimate of \hyperlink{Iterm5}{\textbf{(V)}}}}: We have
\begin{align*}
\sum_{q \geq q_0} |\lambda_q|^p \|B_{\mathbf{w}_{\mu}}^{s'}(e_{\mathfrak{g}(\widetilde{\psi}_{2q-1}(1))})\|_p^p &\stackrel{\eqref{eq-02-09-2356}+\eqref{eq-17-08-M}}{=} \sum_{q \geq \max\{l_0,\frac{\mu+1}{2}\}} |\lambda_q|^p \|B_{\mathbf{w}_{\mu}}^{s'}(e_{\mathfrak{g}(\widetilde{\psi}_{2q-1}(1))})\|_p^p 
\\
&\stackrel{(\mathrm{def. \, of \, } B_{\mathbf{w}_\mu})}{=} \sum_{\substack{q \geq \max\{l_0,\frac{\mu+1}{2}\}\\ \mathfrak{g}(\widetilde{\psi}_{2q-1}(1))>s'}} |\lambda_q|^p \prod_{\ell = \mathfrak{g}(\widetilde{\psi}_{2q-1}(1))-s'}^{\mathfrak{g}(\widetilde{\psi}_{2q-1}(1))-1} \bigg(1 + \frac{\mu}{\ell}\bigg)
\\
&\hspace{0.27in}\leq \sum_{\substack{q \geq \max\{l_0,\frac{\mu+1}{2}\} \\ \mathfrak{g}(\widetilde{\psi}_{2q-1}(1)) > s'}}|\lambda_q|^p \prod_{\ell = \mathfrak{g}(\widetilde{\psi}_{2q-1}(1))-s'}^{\mathfrak{g}(\widetilde{\psi}_{2q-1}(1))-1} \bigg(1+\frac{2q-1}{\ell}\bigg)
\\
&\hspace{0.21in}\stackrel{\eqref{eq-21-08-1602}}{\leq} 2 \, \sum_{q \geq l_0}|\lambda_q|^p \quad (\mbox{by using }  |\widetilde{\psi}_{2q-1}(1)| >s' \mbox{ if } \widetilde{\psi}_{2q-1}(1) > \widetilde{I}')
\\ &\hspace{0.17in}\stackrel{\eqref{eq-02-09-2356}}{<} 2 \, \dfrac{\delta^p}{2^{p+2}} = \dfrac{\delta^p}{2^{p+1}},
\end{align*}
which completes the estimate of \hyperlink{Iterm5}{\textbf{(V)}}.

\vspace{0.1in}

$\bullet$ \underline{\textit{Estimate of \hyperlink{Iterm6}{\textbf{(VI)}}}}: Next, we use \eqref{eq-19-08-1650} for the second time. For any $q \neq q_0$, we have
\begin{align*}
\|B_{{\mathbf{w}_\mu}}^{s'}(\widetilde{x}_q)\|_p^p &\stackrel{\eqref{eq-20-08-0008}}{=} \sum_{n,j \geq 1} \sum_{i \in A_{r_j}^q} \left[\sum_{u =1 }^{N_{\widetilde{\varphi}(n,r_j,i)}} \|B_{\mathbf{w}_\mu}^{s'}\vartheta_{\varpi_{n,r_j}^{(i,u)}}^{{\bf w}_{r_j}}(\Phi_{n,r_j}^{p,q}(i))\|_p^p\right] \stackrel{(q\not=q_0)+\eqref{eq-19-08-1650}}{<} \dfrac{\delta^p}{\varepsilon^p \, 2^{p+1} \, \|\lambda\|_p^p},
\end{align*}
which implies
\begin{align*}
\varepsilon^p \sum_{q > q_0} |\lambda_q|^p \|B_{\mathbf{w}_\mu}^{s'}(\widetilde{x}_q)\|_p^p < \dfrac{\delta^p}{2^{p+1}}.
\end{align*}
This shows the estimate of \hyperlink{Iterm6}{\textbf{(VI)}}.

\subsection{The range of $T$ does not intersect $\bigcup_{\mu > 1}\mathrm{FHC}(B_{\mathbf{w}_\mu})$} Fix some $\mu > 1$. It suffices to show that $x_0$ (in the present case also given by \eqref{eq-sec2-C}, with $x_q$ now defined as in \eqref{eq-sec2-1-open}) does not belong to $\mathrm{FHC}(B_{\mathbf{w_\mu}})$. The proof follows, line by line, the argument given in Subsection \ref{Subsection-3.4}, with the only modifications being the replacement of $B_{\mathbf{w}}$ by $B_{\mathbf{w}_\mu}$, $\psi_k$ by $\widetilde{\psi}_k$, and the use of \ref{it-A3} in place of \ref{it-A1}.

\section*{Acknowledgments}

The first author thanks the Instituto de Matem\'atica e Estat\'istica of the Universidade Federal de Uberl\^andia for its hospitality during his two-month visit. His research was partially supported by CNPq under grants 312167/2021-0, 403964/2024-5, and 406457/2023-9. The second author thanks the Instituto de Matem\'atica e Estat\'istica at Universidade Federal de Uberl\^andia for their hospitality during his one-year visit. His research was partially supported by the Coordena\c{c}\~ao de Aperfei\c{c}oamento de Pessoal de N\'ivel Superior -- Brasil (CAPES) -- Finance Code 001 and by FAPEAM. The third and fourth authors were supported by FAPEMIG under grants RED-00133-21 and APQ-01853-23.


\begin{thebibliography}{30}


\bibitem{BayErnMen16}
\textsc{F. Bayart; R. Ernst; Q. Menet}, \textit{Non-existence of frequently hypercyclic subspaces for $P(D)$}, Israel Journal of Mathematics {\bf 214}, No. 1, 149--166 (2016).


\bibitem{BayGri2004}
\textsc{F. Bayart and S. Grivaux}, \textit{Hypercyclicité : le rôle du spectre ponctuel unimodulaire Hypercyclicity: the role of the unimodular point spectrum}, Comptes Rendus Mathematique {\bf 338}, 703--708, (2004).


\bibitem{BayGri2006}
\textsc{F. Bayart and S. Grivaux}, \textit{Frequently hypercyclic operators}, Transactions of the American Mathematical Society {\bf 358}, No. 11, 5083--5117 (2006).


\bibitem{BayMath-book}
\textsc{F. Bayart and E. Matheron}, \textit{Dynamics of Linear Operators}. Cambridge University Press, New York, 2009.


\bibitem{BayRuz2015}
\textsc{F. Bayart and I. Z. Ruzsa}, \textit{Difference sets and frequently hypercyclic weighted shifts}, Ergodic Theory and Dynamical Systems {\bf 35}, No. 11, 691--709 (2006).


\bibitem{Bernal06}
\textsc{L. Bernal-Gonz\'{a}lez}, \textit{Hypercyclic subspaces in Fréchet spaces}, Proceedings of the American Mathematical Society {\bf 134}, 1955--1961 (2006).


\bibitem{BerCalLopPra25}
\textsc{L. Bernal-Gonz\'{a}lez; M. C. Calderón-Moreno, J. López-Salazar; J. A. Prado-Bassas}, \textit{Hypercyclic subspaces for sequences of finite order differential operators}, Journal  of Mathematical Analysis and Applications {\bf 546}, No 1, 13p (2025).


\bibitem{BerMon1995}
\textsc{L. Bernal-Gonz\'{a}lez and A. Montes-Rodr\'{\i}guez}, \textit{Universal functions for composition operators}, Complex Variables, Theory and Application: An International Journal {\bf 27}, 47--56 (1995).


\bibitem{BesMenet15}
\textsc{J. B\`es and Q. Menet}, \textit{Existence of common and upper frequently hypercyclic subspaces}, Journal of Mathematical Analysis and Applications {\bf 432}, 10--37 (2015).
	
	
\bibitem{BonillaErdmann12}
\textsc{A. Bonilla and K.-G. Grosse-Erdmann}, \textit{Frequently hypercyclic subspaces}, Monatshefte f\"ur Mathematik {\bf 168}, No. 3-4, 305--320 (2012).


\bibitem{ChanMad25}
\textsc{K. C. Chan and Z. Madarasz}, \textit{A strictly weakly hypercyclic operator with a hypercyclic subspace}, Journal of Operator Theory {\bf 93}, No. 2, 435--476 (2025).


\bibitem{DasMun24}
\textsc{B. K. Das and A. Mundayadan}, \textit{Dynamics of weighted backward shifts on certain analytic function spaces}, Results in Mathematics {\bf 79}, 242 (2024).


\bibitem{GonLeonMont-2000}
\textsc{M. Gonz\'alez, F. Le\'on-Saavedra and A. Montes-Rodr\'{\i}guez}, \textit{Semi-Fredholm theory: hypercyclic and supercyclic subspaces},  Proceedings of the London Mathematical Society (3) {\bf 81} 169--189 (2000).

	
\bibitem{Gro-ErdMang2011-book}
\textsc{K.-G. Grosse-Erdmann and A. Peris Manguillot}, \textit{Linear Chaos}. Berlin: Springer, 2011.


\bibitem{guerre}
\textsc{S. Guerre-Delabri\`ere}, \textit{Classical Sequences in Banach Spaces}, Marcel Dekker, Inc, New York, 1992.
	

\bibitem{LeonMon97}
\textsc{F. León-Saavedra and A. Montes-Rodríguez}, \textit{Linear structure of hypercyclic vectors}, Journal of Functional Analysis {\bf 148}, 524--545 (1997).


\bibitem{LeonMul06}
\textsc{F. León-Saavedra and V. Müller}, \textit{Hypercyclic sequences of operators}, Studia Mathematica {\bf 175}, No 1, 1--18 (2006).


\bibitem{Menet2014}
\textsc{Q. Menet}, \textit{Hypercyclic subspaces and weighted shifts}, Advances in Mathematics {\bf 255}, 305--337 (2014).


	
\bibitem{Menet2015}
\textsc{Q. Menet}, \textit{Existence and non-existence of frequently hypercyclic subspaces for weighted shifts}, Proceedings of the American Mathematical Society {\bf 143}, No 6, 2469--2477 (2015).


\bibitem{Montes96}
\textsc{A. Montes-Rodríguez}, \textit{Banach spaces of hypercyclic vectors}, The Michigan Mathematical Journal {\bf 43}, No 3, 419--436 (1996).


\bibitem{Shkarin09}
\textsc{S. Shkarin}, \textit{On the spectrum of frequently hypercyclic operators}, Proceedings of the American Mathematical Society {\bf 137}, 123--134 (2009).


\bibitem{Shkarin10}
\textsc{S. Shkarin}, \textit{On the set of hypercyclic vectors for the differentiation operator}, Israel Journal of Mathematics {\bf 180}, 271--283 (2010).

\end{thebibliography}
\end{document}